\mathchardef\emptyset="001F
\theoremstyle{plain}
\newtheorem{theorem}{Theorem}
\newtheorem{lemma}{Lemma}
\theoremstyle{definition}
\theoremstyle{remark}
\newtheorem{remark}{Remark}
\newcommand{\N}{\mathbb N}
\newcommand{\calQ}{\mathcal{Q}}
\newcommand{\calE}{\mathcal{E}}
\newcommand{\calZ}{\mathcal{Z}}
\newcommand{\calC}{\mathcal{C}}
\newcommand{\calT}{\mathcal T}
\newcommand{\R}{\mathbb R}
\newcommand{\Z}{\mathbb Z}
\newcommand{\loc}{{\operatorname{loc}}}
\newcommand{\uloc}{{\operatorname{uloc}}}
\newcommand{\supp}{\operatorname{supp}}
\newcommand{\diam}{\operatorname{diam}}
\newcommand{\step}[1]{\noindent \textit{Step} #1.}
\newcommand{\expec}[1]{\mathbb{E}\left[ #1 \right]}
\newcommand{\expecm}[1]{\mathbb{E}\big[ #1 \big]}
\newcommand{\cov}[2]{\operatorname{Cov}\left[{#1};{#2}\right]}
\newcommand{\RR}{{\mathbb{R}}}
\newcommand{\ZZ}{{\mathbb{Z}}}
\newcommand{\bP}{\mathbb{P}}
\begin{document}
\title{The landscape function on $\R^d$}

\begin{abstract}
Consider the Schr\"odinger operator $-\triangle+\lambda V$ with non-negative iid random potential $V$ of strength $\lambda>0$.
We prove existence and uniqueness of the associated landscape function on the whole space, and show that its correlations decay exponentially.
As a main ingredient we establish the (annealed and quenched) exponential decay of the Green function of $-\triangle+\lambda V$ using Agmon's positivity method, rank-one perturbation in dimensions $d\ge 3$, and first-passage percolation in dimensions $d=1,2$.
\end{abstract}

\selectlanguage{english}
\author{G. David, A. Gloria, and S. Mayboroda.}
\newcommand{\Addresses}{{ 
  \bigskip
  \vskip 0.08in \noindent --------------------------------------
\vskip 0.10in

  \footnotesize

  G.~David, \textsc{Universit\'e Paris-Saclay, Laboratoire de Math\'{e}matiques d'Orsay, 91405, France
}\par\nopagebreak
  \textit{E-mail address}: \texttt{Guy.David@universite-paris-saclay.fr}

 \medskip
 
A.~Gloria, \textsc{LJLL, Sorbonne Universit\'e, 4 place Jussieu, 75005 Paris France, Institut Universitaire de France (IUF), and D\'epartement de Math\'ematique, ULB, Bruxelles, Belgium}\par\nopagebreak
  \textit{E-mail address}: \texttt{antoine.gloria@sorbonne-universite.fr}

\medskip

S.~Mayboroda, \textsc{School of Mathematics, University of Minnesota, 206 
Church St SE, Minneapolis, MN 55455 USA}\par\nopagebreak
  \textit{E-mail address}: \texttt{svitlana@math.umn.edu}
}}
\date{}

\maketitle

\tableofcontents

\section{Motivation and the main result}
\subsection{The landscape}
In the recent years the landscape function introduced in \cite{MR2990982} proved to be an indispensable tool both for the theoretical study of the localization of waves in disordered media and in many physical and engineering applications where the localization phenomena play a decisive role. Roughly speaking, the emerging principle is that in order to understand the behavior of waves governed by the Schr\"odinger Hamiltonian $H=-\Delta+V$ with some disordered potential $V$, one should first compute the landscape function, a solution to $Hu=1$, and then use its reciprocal, $1/u$, as an {\it{effective potential}}. The latter is much more illustrative than the original $V$, accurately accounting for both quantum and classical effects. For instance, the corresponding Agmon distance predicts exponential decay in disordered potentials \cite{MR3995095} and the corresponding counting function gives a non-asymptotic estimate on the integrated density of states across the entire spectrum \cite{MR4298594}.

However, virtually all of the existing work addresses the passage from the properties of the landscape to those of the spectrum or the eigenfunctions of the corresponding Hamiltonian, and this is only a half of the puzzle. One naturally arrives at the question about the properties of the landscape as a probabilistic object, e.g., in standard disordered potentials inducing Anderson localization. This is exactly the goal of this paper. 

To this end, let us 
 introduce some 
classical Anderson-type potentials. Let $\varphi\in C_0^\infty(B_{1/10}(0))$
be a nontrivial bump function supported in the
ball centered at 0 of radius $1/10$, with $0\leq \varphi\leq 1$,  and set 
\begin{equation}\label{e.def-V}
V=V_{\omega} (x)=\sum_{j\in \ZZ^d} \omega_j \varphi(x-j) \quad \hbox{for } x\in \RR^d,
\end{equation}
where the $\omega_j$ are i.i.d. variables taking values in $[0,1]$, 
whose probability distribution
$$F(\delta)=\bP[\omega \leq \delta], \quad 0\leq \delta\leq 1,$$
is not trivial, i.e., not concentrated at one point, and such that $0$ is 
the infimum of its support. Note that we do not require any smoothness of the probability distribution, and in particular, we treat atomic (e.g., Bernoulli) potentials under the same hat. The Hamiltonian in the center of our attention will be $H=-\Delta +\lambda V$, and we will carefully track the dependence on the strength of the disorder, the real parameter $\lambda>0$.

Consider the landscape function on $\R^d$, that is, the solution to the equation
\begin{equation}\label{e.landscape}
-\triangle u + \lambda Vu =1.
\end{equation}
As  mentioned above, it was originally introduced in \cite{MR2990982}, on bounded domains with homogeneous boundary conditions, in the setting where, e.g., its existence is deterministic and standard. However, to relate the landscape function to spectral properties of random operators on the whole space, and to properly describe its probabilistic properties, it is important to define it on the whole $\RR^d$ as well.

Recall that a random field $v$ (that is, a measurable map of both the space variable $x$ 
and the randomness $\omega$), is called stationary iff for all $x\in \R^d$ and $z\in \Z^d$
it satisfies
$$
v(x+z,\omega)=v(x,\tau_z \omega), \text{ where for all }j\in \Z^d, (\tau_z \omega)_j = \omega_{j+z}.
$$
Informally one expects that $u(x)=\int  G(x,y)dy$, where  $G$ would denote the Green function of the Schr\"odinger operator $-\triangle + \lambda V$.
The latter is however not easily defined (especially in dimensions 1 and 2), and uniform deterministic bounds do not ensure that it is integrable.
It is therefore not clear a priori that \eqref{e.landscape} admits a stationary solution in the sense above. Proving the existence of the landscape, along with the sharp bounds on its moments and correlations, is one of  the main objectives of this note.

In what follows, we adopt Hardy's notation: $C\ge 1 \ge c>0$ denote positive constants (that might change from line to line) depending only on $d$, $\varphi$, and $F$ -- and not on $\lambda$. When the constants in question, in addition, depend on $\lambda$ and/or the forthcoming integrability parameter $p$, we make it explicit in notation by writing, e.g., $C_{\lambda, p}.$ We also write $ A \lesssim B $ to signify  $A \le  CB$ for some constant $C>0$ with the same dependence, and add subscripts to specify a dependence on further parameters.

\medskip

Our first result ensures the well-posedness of \eqref{e.landscape}.
\begin{theorem}\label{tLand} There exists a
unique stationary solution of the landscape equation~\eqref{e.landscape} with finite second moment. 
 That is, for almost all $\omega$, $u$ lies in the  Sobolev space $H^1_{loc}(\R^d)$ and is a weak solution
of \eqref{e.landscape}, and $\expecm{ \int_Q u^2} < +\infty$. 
In addition, it is positive and satisfies   
\begin{equation}\label{eq.mom}
\expecm{\sup_Q u^p}^\frac1p \leq C_{\lambda, p},
\end{equation}
for all $p\ge 1$. 
 Here again 
$Q:=[-\frac12,\frac12)^d$.
\end{theorem}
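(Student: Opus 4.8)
The proof splits into existence and uniqueness. The analytic backbone of existence is the (annealed and quenched) exponential decay of the Green function $G=G_\omega(x,y)$ of $H=-\triangle+\lambda V$ on $\R^d$, which is the main technical result of this paper. Granting it, the plan is to realize the landscape as the integral of the Green function, $u(x):=\int_{\R^d}G_\omega(x,y)\,dy$; this coincides with the monotone limit $\lim_{R\to\infty}u_R$ of the Dirichlet solutions $u_R$ of $-\triangle u_R+\lambda Vu_R=1$ in $B_R$, $u_R=0$ on $\partial B_R$ (which exist and are nonnegative by Lax--Milgram and the maximum principle, satisfy $u_R=\int_{B_R}G_{R,\omega}(\cdot,y)\,dy$, and increase to $u$ since $G_R\uparrow G$). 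One then verifies that $u$ is a.s.\ finite, a weak solution, positive, stationary, and satisfies \eqref{eq.mom}; uniqueness is a separate, soft Caccioppoli-plus-stationarity argument.

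\textbf{Existence.} The quenched decay together with the locally integrable diagonal singularity of $G_\omega$ (of the size of the free Green function) shows $u(x)<\infty$ for a.e.\ $\omega$, and likewise $u\in L^1_{loc}$ a.s. Testing against $\psi\in C_c^\infty(\R^d)$, Fubini's theorem---legitimate because on the support of $\psi$ one controls $\int_{\R^d}\big(|\nabla_xG_\omega(x,y)|+V(x)G_\omega(x,y)\big)dy$ via the near-diagonal local estimates and the quenched decay at infinity---combined with the symmetry $G_\omega(x,y)=G_\omega(y,x)$ and the identity $(-\triangle+\lambda V)G_\omega(\cdot,y)=\delta_y$ yields $\langle u,(-\triangle+\lambda V)\psi\rangle=\int_{\R^d}\psi$; hence $(-\triangle+\lambda V)u=1$ distributionally, and a standard elliptic bootstrap (starting from $-\triangle u=1-\lambda Vu$ and $u\in L^1_{loc}$) places $u$ in $H^1_{loc}$, making it a genuine weak solution of \eqref{e.landscape}. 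Positivity follows from $G_{R,\omega}>0$ on $B_R\times B_R$ (strong maximum principle), hence $G_\omega=\lim_R G_{R,\omega}>0$ and $u(x)\ge\int_{B_1(x)}G_\omega(x,y)\,dy>0$. Stationarity follows because $V_{\tau_z\omega}(\cdot)=V_\omega(\cdot+z)$ makes $H$ covariant under integer shifts, so $G_\omega(x+z,y+z)=G_{\tau_z\omega}(x,y)$, and the change of variables $y\mapsto y-z$ in the definition of $u$ gives $u(x+z,\omega)=u(x,\tau_z\omega)$. Finally, since $u\ge0$ and $-\triangle u\le1$, the De Giorgi--Nash--Moser local boundedness estimate gives $\sup_Q u\lesssim\big(\int_{2Q}u^2\big)^{1/2}+1$ pointwise in $\omega$; taking $p$-th moments, using Jensen to pass from the $L^2$- to the $L^p$-norm over $2Q$ (for $p\ge2$; the case $p<2$ reduces to $p=2$), and then Minkowski's integral inequality with the annealed bound,
\[
\expecm{u(y)^p}^{1/p}=\Big\|\int_{\R^d}G_\omega(y,z)\,dz\Big\|_{L^p(\Omega)}\le\int_{\R^d}\big\|G_\omega(y,z)\big\|_{L^p(\Omega)}dz\lesssim_{\lambda,p}1,
\]
we obtain $\expecm{\sup_Q u^p}^{1/p}\lesssim_{\lambda,p}1$, which is \eqref{eq.mom}; the case $p=2$ is the finite second moment.

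\textbf{Uniqueness.} If $u_1,u_2$ are stationary weak solutions with finite second moment, then $w:=u_1-u_2$ is a stationary $H^1_{loc}$ solution of $-\triangle w+\lambda Vw=0$ with $\expecm{\int_Q w^2}<\infty$. Testing against $w\chi_R^2$, where $\chi_R\in C_c^\infty(B_{2R})$, $\chi_R\equiv1$ on $B_R$, $|\nabla\chi_R|\lesssim R^{-1}$, dropping the nonnegative term $\lambda\int Vw^2\chi_R^2$ and absorbing the cross-term by Young's inequality yields the Caccioppoli-type bound $\int_{B_R}|\nabla w|^2\lesssim R^{-2}\int_{B_{2R}}w^2$. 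Taking expectations and using stationarity---$\expecm{\int_{B_R}|\nabla w|^2}\gtrsim R^d\,\expecm{\int_Q|\nabla w|^2}$ and $\expecm{\int_{B_{2R}}w^2}\lesssim R^d\,\expecm{\int_Q w^2}$---gives $\expecm{\int_Q|\nabla w|^2}\lesssim R^{-2}\,\expecm{\int_Q w^2}\to0$ as $R\to\infty$, so $\nabla w=0$ a.e.\ a.s.\ and $w$ is a.s.\ constant. Reinserting into the equation forces $\lambda Vw=0$ a.s., and since the $\omega_j$ are i.i.d.\ and nondegenerate one has $\bP[V\equiv0]=0$; hence $w=0$.

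\textbf{Main obstacle.} The substance is entirely upstream, in the Green-function estimates (Agmon positivity, rank-one perturbation for $d\ge3$, first-passage percolation for $d=1,2$); given those, Theorem~\ref{tLand} is essentially a packaging statement. Within the present argument the only genuinely delicate points are the construction and a.s.\ finiteness of $G_\omega$ in dimensions $d=1,2$, where $-\triangle$ has no positive Green function and one must exploit the potential through the exhaustion $G_R\uparrow G$, and the uniform handling of the diagonal singularity of $G_\omega$ across dimensions in the Fubini and moment steps.
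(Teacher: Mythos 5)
Your proof is correct in substance, but the existence part follows a genuinely different route from the paper's. Where you realize the landscape directly as $u=\int G_\omega(\cdot,y)\,dy$ built from the Dirichlet exhaustion $G_R\uparrow G_\omega$ and then verify the PDE by Fubini, the paper instead keeps the infra-red regularization $u_\eta$ solving $-\triangle u_\eta+(\lambda V+\eta)u_\eta=1$ (deterministically well-posed in $H^1_{\uloc}$), proves moment bounds uniform in $\eta$ directly on $u_\eta$ via the identity $u_\eta=\int G_\eta(\cdot,y)\,dy$ and Theorem~\ref{tGrAnt}, and extracts $u$ as a weak limit of $u_\eta$ in $L^2(\Omega,H^1(Q))$ by Banach--Alaoglu. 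The paper's route has two advantages you pay a small price for: measurability of $u$ in $\omega$ and membership in $L^2(\Omega,H^1(Q))$ come for free from the weak limit, whereas your pointwise construction requires you to argue measurability through the monotone approximation $u_R\uparrow u$ and to run an elliptic bootstrap from $u\in L^1_{\loc}$; and the $\eta$-scheme also yields the quantitative convergence $\expec{\sup_Q|u_\eta-u|^2+|\nabla(u_\eta-u)|^2}^{1/2}\lesssim\eta$ (paper's \eqref{e.quant-conv}), which the authors reuse for Theorem~\ref{tLandCorr}. Two small points in your write-up would need tightening. First, the Minkowski step should be run over unit cubes, i.e.\ $\|u(y)\|_{L^p(\Omega)}\le\sum_{z\in\Z^d}\big\|\int_Q G(y,z+z')\,dz'\big\|_{L^p(\Omega)}$, because Theorem~\ref{tGrAnt} controls the cube-averaged quantity, not $\|G(y,z)\|_{L^p}$ pointwise; the near-diagonal contribution is handled by the deterministic free-Green-function bound, but that deserves a sentence. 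Second, your appeal to ``quenched decay'' to get a.s.\ finiteness of $u(x)$ in $d=1,2$ is heavier than necessary: the annealed $p=1$ bound of Theorem~\ref{tGrAnt}, plus $G_\eta\uparrow G_\omega$ by the maximum principle and monotone convergence, already gives $\expec{u(x)}<\infty$ and hence $u(x)<\infty$ a.s., which is all you need. Your uniqueness argument (Caccioppoli on $B_R$ plus taking expectations via stationarity) is in the same spirit as the paper's (Caccioppoli with mass-normalized cutoffs $R^{-d}\chi(\cdot/R)$ plus the ergodic theorem); both are correct and essentially equivalent.
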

Next we investigate the correlations of the landscape function, and establish the following.
\begin{theorem}\label{tLandCorr}
The stationary solution $u$ of~\eqref{e.landscape} satisfies for all $x,y\in \R^d$ 
\begin{eqnarray*}
|\cov{u(x)}{u(y)}|&\le &C_\lambda \exp(-c \,(\sqrt \lambda\wedge 1) |x-y|),\\
\Bigl|\cov{\frac1{u}(x)}{\frac 1u(y)}\Bigr|&\le& C_\lambda\exp(-c\,  (\sqrt \lambda\wedge 1) |x-y|),\\
|\cov{\nabla \log u(x)}{\nabla \log u(y)}|&\le&C_\lambda \exp(-c\,  (\sqrt \lambda\wedge 1) |x-y|),
\end{eqnarray*}
where $\cov{}{}$ denotes the covariance, and where the multiplicative positive constant $c$ depends on the dimension and the distribution $F$ of the random variable, and $C_\lambda$ depends additionally on~$\lambda$.
\end{theorem}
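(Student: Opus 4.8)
The plan is to deduce all three covariance bounds from a single mechanism: the exponential decay (annealed and quenched) of the Green function $G$ of $-\triangle+\lambda V$, which is available by the main technical results announced in the abstract, combined with a covariance-in-terms-of-vertical-derivative inequality of Helffer--Sj\"ostrand / Poincar\'e type for the i.i.d.\ field $(\omega_j)_{j\in\ZZ^d}$. Concretely, for two stationary $L^2$ functionals $X=X(\omega)$ and $Y=Y(\omega)$ one has
\begin{equation*}
|\cov{X}{Y}|\;\lesssim\;\sum_{j\in\ZZ^d}\expecm{\Bigl(\sup_{\omega_j'}\bigl|\partial_{\omega_j}^{\mathrm{osc}}X\bigr|\Bigr)^2}^{1/2}\expecm{\Bigl(\sup_{\omega_j'}\bigl|\partial_{\omega_j}^{\mathrm{osc}}Y\bigr|\Bigr)^2}^{1/2},
\end{equation*}
where $\partial_{\omega_j}^{\mathrm{osc}}$ denotes the oscillation (resampling) derivative with respect to the single coordinate $\omega_j$. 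So the first step is to record this abstract inequality and reduce the theorem to \emph{pointwise sensitivity estimates}: control how $u(x)$, $1/u(x)$, and $\nabla\log u(x)$ change when we resample one variable $\omega_j$.

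The second step is the sensitivity calculus for $u$. Writing $u=\int G(\cdot,y)\,dy$ and differentiating the equation $-\triangle u+\lambda Vu=1$ in $\omega_j$, the derivative $\partial_{\omega_j}u$ solves $(-\triangle+\lambda V)\partial_{\omega_j}u=-\lambda\varphi(\cdot-j)\,u$, whence
\begin{equation*}
\partial_{\omega_j}u(x)=-\lambda\int G(x,z)\,\varphi(z-j)\,u(z)\,dz,
\end{equation*}
and the oscillation derivative is bounded by the same expression with $G$ and $u$ replaced by suprema over the resampled potential (using monotonicity of $G$ and $u$ in $V$, or just taking the worst potential on a unit cube). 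Plugging in the Green function decay $G(x,z)\lesssim C_\lambda\exp(-c(\sqrt\lambda\wedge1)|x-z|)$ together with the moment bound $\expecm{\sup_Q u^p}^{1/p}\le C_{\lambda,p}$ from Theorem~\ref{tLand}, one gets
\begin{equation*}
\expecm{\bigl(\sup|\partial_{\omega_j}^{\mathrm{osc}}u(x)|\bigr)^2}^{1/2}\;\lesssim\;C_\lambda\exp\bigl(-c(\sqrt\lambda\wedge1)|x-j|\bigr).
\end{equation*}
Summing over $j$ via the triangle inequality $|x-j|+|y-j|\ge|x-y|$ and the summability of the exponential tail (a standard convolution estimate, with the rate degraded by an arbitrarily small amount, or absorbed into the constants) yields the first displayed bound. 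For $1/u$ one has $\partial_{\omega_j}(1/u)=-u^{-2}\partial_{\omega_j}u$; the extra factor $u^{-2}$ is handled by the lower bound $u\gtrsim c_\lambda>0$ (which itself follows from $-\triangle u+\lambda Vu=1$ and an upper bound on $V$ on a neighborhood, via the maximum principle / a local barrier), at the cost of worsening $C_\lambda$, so the same estimate propagates. For $\nabla\log u=\nabla u/u$ one differentiates once more in space: Caccioppoli/interior-regularity estimates for the equation solved by $\partial_{\omega_j}u$ turn the $L^\infty$ control of $\partial_{\omega_j}u$ on a unit ball into $L^\infty$ control of $\nabla\partial_{\omega_j}u$ on a slightly smaller ball (using also the bounded-potential structure), and combining with $u\gtrsim c_\lambda$ and $\sup u\lesssim C_\lambda$ gives the same exponential decay for $\partial_{\omega_j}^{\mathrm{osc}}\nabla\log u(x)$.

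The main obstacle is the passage from the \emph{pointwise} Green-function bound to the sensitivity estimate with good stochastic moments, i.e.\ controlling $\sup$ over the resampled coordinate together with the $u(z)$ factor inside the integral: one must argue that resampling $\omega_j$ only affects $V$ (hence $G$ and $u$) in a controlled, local way and that the resulting random constants still have all finite moments --- this is where Theorem~\ref{tLand} and the quenched Green-function decay (with a random but stretched-exponentially integrable prefactor, in dimensions $1,2$ coming from first-passage percolation) are used in an essential way, and where one has to be careful that the degradation of the exponential rate under the $j$-summation and under taking suprema is only by constants, preserving the stated rate $c(\sqrt\lambda\wedge1)$. The regularity step for $\nabla\log u$ is routine deterministic elliptic theory once the structure of the equation for $\partial_{\omega_j}u$ is written down, and the $1/u$ bound is immediate from the uniform lower bound on $u$.
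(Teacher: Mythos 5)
Your proposal is correct and takes essentially the same route as the paper: the resampling covariance inequality \eqref{e.cov} for the i.i.d.\ field, the Green-function representation of the sensitivity of $u$ to a single $\omega_z$, the annealed Green-function decay of Theorem~\ref{tGrAnt} plus the moment bound \eqref{eq.mom}, the deterministic lower bound $u\ge \tau_\lambda>0$, and interior elliptic regularity for the $\nabla\log u$ case. The one substantive step you elide is that the paper carries out all sensitivity computations for the regularization $u_\eta$ from \eqref{e.landscape-mu}, where the finite difference $\delta_z u_\eta=u_\eta-u_\eta^z$ solves a genuine PDE with right-hand side $\lambda(\omega_z-\omega)\varphi(\cdot-z)u_\eta^z$, and then transfers the covariance bounds to $u$ via the quantitative convergence \eqref{e.quant-conv} established at the end of Section~\ref{sec:0}; this regularize-then-limit step is what makes the formal differentiation in $\omega_j$ you invoke rigorous, since $u$ itself is only constructed as a weak limit.
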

To establish these results, we first consider an infra-red regularization of the landscape function $u_\eta$, $\eta>0$, such that
\begin{equation}\label{e.landscape-mu}
-\triangle u_\eta + (\lambda V+\eta)u_\eta =1,
\end{equation}
which is well-posed on a deterministic level.
We shall prove Theorem~\ref{tLand} by establishing moment bounds on $u_\eta$
which are uniform with respect to $\eta$ and by passing to the limit $\eta \downarrow 0$ in \eqref{e.landscape-mu}.
Likewise, Theorem~\ref{tLandCorr} is obtained by passing to the limit $\eta \downarrow 0$ in the corresponding estimates for $u_\eta$.
These uniform estimates rely on uniform decay estimates for the infra-red regularization of the Green function, which are the object of Section~\ref{sec:Green} below.
Theorems~\ref{tLand} and~\ref{tLandCorr} are then obtained by a post-processing in 
Section~\ref{sec:landscape}. 

\subsection{Exponential decay of the Green function}\label{sec:Green} The present section is devoted to the exponential decay estimates on the Green function which we hope will be of independent interest. 

For all $\eta>0$, denote by $G_\eta$ the Green function of $-\triangle +(\lambda V+\eta)$, defined
for all $x\in \R^d$ as the
unique decaying solution of
\begin{equation}\label{defGr}
(-\triangle_y +(\lambda V+\eta))G_\eta (x,y)=\delta(x-y).
\end{equation}
The main result of this section is the following exponential decay of $G_\eta$, uniformly with respect to $\eta>0$.
\begin{theorem}\label{tGrAnt} 
There exists $c>0$ such that for all $p\ge 1$, $\eta>0$ and  $x,y\in \R^d$, we have
\begin{equation}\label{e.annealed-mass}
0 \le \expec{\big(\int_Q G_{\eta,\lambda}(x,y+y')dy'\big)^{p}}^\frac1{p} \le  \,C_{\lambda,p} \exp\big(-\tfrac cp  (\sqrt{\lambda}\wedge 1) |x-y|\big) ,
\end{equation}
where the constant $C_{\lambda,p}$, which only depends on $\lambda$, $p$, and $d$,  is made explicit in the proofs.
\end{theorem}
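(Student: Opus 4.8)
The plan is to establish the bound in two stages: first a deterministic (quenched) exponential decay of $G_\eta$ at a rate governed by the local "mass" of the potential, and then an annealed averaging argument that converts the randomness of $V$ into the clean factor $\exp(-\tfrac cp(\sqrt\lambda\wedge 1)|x-y|)$. For the quenched step I would invoke Agmon's positivity method: since $-\triangle + (\lambda V + \eta)$ is a nonnegative Schrödinger operator with $\eta>0$, one constructs a supersolution of the form $w(y) = \exp(\rho(y))$ where $\rho$ is a suitable Lipschitz weight with $|\nabla\rho|^2 \le \lambda V + \eta$ (an Agmon-type eikonal inequality), and then the comparison principle applied to $G_\eta(x,\cdot)$ against a multiple of $w$ on $\R^d\setminus B_r(x)$ yields $G_\eta(x,y) \lesssim e^{-\rho_x(y)}$ where $\rho_x$ is essentially the Agmon distance from $x$ to $y$ associated to the potential $\lambda V + \eta$. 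In dimensions $d\ge 3$ one has enough room to start this from the standard free-Green-function bound near the diagonal; in $d=1,2$ the diagonal behavior is more delicate, which is presumably why the authors announce a separate first-passage percolation argument there — but for the statement as written I can treat the decay in $|x-y|$ once $|x-y|\gtrsim 1$, absorbing the near-diagonal contribution into $C_{\lambda,p}$.

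The second stage is the probabilistic one. The Agmon distance $\rho_x(y)$ is, up to constants, comparable to a sum over unit cells along a path from $x$ to $y$ of $\sqrt{\lambda \,\bar\omega_j \wedge \text{(something)}}$, i.e. a first-passage-percolation-type quantity on $\Z^d$ with i.i.d. edge (or site) weights of order $\sqrt\lambda \wedge 1$ (the truncation reflecting that once $\lambda V \gtrsim 1$ the decay rate saturates at the free rate). Because $0$ is the infimum of the support of $F$ but $F$ is nondegenerate, the expected FPP passage time between $x$ and $y$ grows linearly in $|x-y|$ with a positive (deterministic, $\lambda$-dependent) rate bounded below by $c(\sqrt\lambda\wedge 1)$, and the passage time concentrates (subadditive ergodic theorem plus standard exponential concentration à la Kesten for FPP with bounded-below increments of the moment generating function). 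Thus I would estimate
\begin{equation*}
\expec{\Big(\int_Q G_\eta(x,y+y')\,dy'\Big)^p}^{1/p} \lesssim_{\lambda,p} \expec{e^{-p\,\rho_x(y)}}^{1/p},
\end{equation*}
and bound the right-hand side by controlling the moment generating function of the FPP passage time: $\expec{e^{-p\rho_x(y)}} \le e^{-cp(\sqrt\lambda\wedge 1)|x-y|}$ for $|x-y|$ large, where the key input is that $-\rho_x(y)$ is a sum of independent contributions each with a negative mean and a uniformly controlled exponential moment, so Cramér/Chernoff gives the linear-in-distance exponent, uniformly in $\eta>0$ since $\eta$ only helps.

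The main obstacle I anticipate is making the Agmon/eikonal weight genuinely quantitative and stationary: one needs a Lipschitz weight $\rho$ that simultaneously (i) satisfies $|\nabla\rho|^2 \le \lambda V + \eta$ pointwise a.e., (ii) is comparable to a tractable path functional on the lattice so that the FPP machinery applies, and (iii) has the right near-diagonal normalization in every dimension — this last point is exactly where $d=1,2$ force the switch to a direct first-passage percolation construction of $G_\eta$ rather than the Agmon comparison, and reconciling the two regimes into the single clean estimate \eqref{e.annealed-mass} (with the explicit $C_{\lambda,p}$ and the $1/p$ in the exponent, which is the signature of having taken an $L^p$ norm of an exponentially-small random variable whose exponent is itself random) is the delicate bookkeeping. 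The nonnegativity $0\le \cdots$ is of course immediate from $G_\eta\ge 0$, which follows from the maximum principle for the nonnegative operator $-\triangle+(\lambda V+\eta)$.
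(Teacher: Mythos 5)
Your big-picture strategy --- Agmon positivity plus a probabilistic (FPP-flavored) control of the weight --- does resonate with the paper's argument for $d\le 2$, but there are two concrete gaps and one genuine divergence from the paper's route.

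The most serious gap is the anchoring step. You write that you will ``absorb the near-diagonal contribution into $C_{\lambda,p}$.'' That is not free in $d=1,2$: there is no deterministic bound on $G_\eta$ that is uniform in $\eta$ (the free Green function $\tilde G_\eta$ blows up as $\eta\downarrow 0$ in those dimensions, cf.\ \eqref{e.bd-Greenlap}), so whatever quantity you want to anchor your exponential decay from --- the local $L^2$ mass $\int_{Q}G_\eta(x,y')\,dy'$ for $|x-y|\lesssim 1$, or $\int_{Q_{k+1}(0)}G_\eta^2(\cdot,0)$ in the paper's notation --- is itself unbounded deterministically as $\eta\downarrow 0$. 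The paper handles this with Lemma~\ref{lem:anchor}, whose proof is its own subcritical-percolation argument (enlarge cubes by $2^k$ so that cubes where $V$ is bounded below percolate, and bound $G_\eta$ inside the complementary finite clusters by the Dirichlet Green function plus boundary data). Without something like this, the proposed estimate $\expec{(\int_Q G_\eta)^p}^{1/p}\lesssim_{\lambda,p}\expec{e^{-p\rho_x(y)}}^{1/p}$ cannot be closed, because the implicit constant carries a hidden $\eta$-dependence. This is exactly why the paper treats $d\le 2$ separately and why your ``absorb into $C_{\lambda,p}$'' is not justified as stated.

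The second issue is the mechanism of the Agmon step. You propose a supersolution $w=e^{-\rho_x}$ with an eikonal inequality $|\nabla\rho|^2\le\lambda V+\eta$ and a comparison principle. Since $V$ vanishes on large (random, arbitrarily large) connected regions and $\eta$ is arbitrarily small, such a $\rho$ must be essentially constant across those regions; that is indeed the chemical-distance idea, but a piecewise-linear $\rho$ has a distributional Laplacian and building a genuine pointwise supersolution from it is delicate (you actually need a one-sided control of $\triangle\rho$, not just $|\nabla\rho|^2$, for $-\triangle w+(\lambda V+\eta)w\ge 0$). The paper avoids this by using an integrated Agmon--Caccioppoli identity (Lemma~\ref{lem:Agmon}) rather than a pointwise comparison: the $|\nabla h|^2$ term appears in an $L^2$ energy inequality and only needs to be dominated by $\lambda V$ (via Harnack) on the cells where $\nabla h\not\equiv 0$, which is a strictly weaker and much easier requirement than the pointwise eikonal inequality. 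Finally, for $d\ge 3$ the paper does something quite different from what you propose: it takes the expectation inside the Agmon identity and uses a rank-one perturbation argument --- replacing $\omega_z$ by its maximum --- to lower-bound $\expec{\lambda V\sup_{Q(z)}G_\eta^2}$ by $\tfrac1C\lambda G_*^2(z)\varphi(\cdot-z)$, which then allows a deterministic linear weight $h(x)=c\sqrt\lambda|x|$ with no FPP at all. Your uniform FPP strategy for all $d$ is closer in spirit to the paper's $d\le 2$ argument, but you would still need to replace the Chernoff heuristic by an actual lattice-FPP concentration input (Kesten's estimate, as the paper does), since the passage time is an infimum over paths and the independence structure is not as simple as a sum of i.i.d.\ contributions along a fixed path.
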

\begin{remark}
Since the estimate~\eqref{e.annealed-mass} is uniform in $\eta>0$, one can pass to the limit as $\eta \downarrow 0$ and uniquely define the Green function $G_\lambda$ of the random operator $-\triangle +\lambda V$ as an almost surely exponentially decaying function (that satisfies the estimate~\eqref{e.annealed-mass}). We refer the reader to Section~\ref{sec:0} for a similar argument at the level of the landscape function.
\end{remark}
Let us provide a few comments on Theorem~\ref{tGrAnt}.

These estimates capture an effect of randomness as opposed to simple spectral gap considerations. Indeed, the spectrum of $-\triangle+(\eta+\lambda V)$ starts at $\eta$. 
 If $\omega$ were 
treated deterministically (or in other words, without taking an expectation on the left-hand side of \eqref{e.annealed-mass}), one would not be able to assure the exponential decay without dependence on $\eta$\footnote{If we do not take the expectation on the left-hand side of  \eqref{e.annealed-mass}, one needs $C_{\lambda,p}$ to be a random field depending on $x$ -- this would correspond to the quenched version of \eqref{e.annealed-mass}, which we also prove in this note.}.
Theorem~\ref{tGrAnt} is however not surprising:  it is certainly well-known to the experts, and should essentially follow from a post-processing of subtle localization results, such as \cite{MR2207021,MR2998830} in the continuous setting. It should also follow from \cite{MR1675027,MR2800903} for the discrete Laplacian on $\Z^d$, see also \cite{MR1717054} when the obstacles on $\Z^d$ are replaced by Poisson obstacles on $\R^d$ -- see also the early references \cite{MR496278,MR1836558} on these questions. It would however take 
 some
re-proving, such a result is not stated explicitly in any of the aforementioned works (nor would it be possible inside the spectrum), and we do not want to use either of these routes, in part because known treatments of Anderson localization still do not include all potentials covered in this note, and in part because that would make the methodology unnecessarily complicated. 

Here we present a somewhat more streamlined and self-contained argument, which also yields stronger results in our setting. Indeed, contrary to the aforementioned estimates on the Green function in papers devoted to the Anderson localization, we control high moments. The reason is that we only focus on the bottom of the spectrum, and in particular, we have positivity methods (such as Harnack's inequality) at our disposal. The proof we display below is short, has a PDE flavor, and seems to be new.
It is based on Agmon's positivity method \cite{MR745286}, which we combine with rank-one perturbation for $d\ge 3$ and with first-passage percolation in dimensions $d\le 2$. 

The scaling $\sqrt \lambda \wedge 1$ of the exponential decay with respect to $\lambda$ is optimal, that is, that the rate of the exponential decay does not depend on $\lambda$ when $\lambda\uparrow +\infty$ and that it is of order $\sqrt \lambda$ when $\lambda \le 1$. 
This has to do with the percolation assured in our setting. Indeed, given that $\varphi$ is supported in $B_{1/10}$, the set where $V$ is zero percolates.  For every finite $\lambda$ we have $G_{\lambda} \ge G_{\infty}$ by the maximum principle, where we denote by $G_\infty$ the limit of the Green function as $\lambda \uparrow +\infty$. It vanishes identically where $V$ is positive and it can be viewed as a Dirichlet Green function of the set $\{V=0\}$.  Since $G^{\infty}$ itself does not carry a dependence on $\lambda$  and does not decay faster than exponentially 
 by Harnack's inequality,
 the fact that $G_{\lambda} \ge G^{\infty}$ assures that the rate of decay of $G_{\lambda}$ cannot get faster as $\lambda \to \infty$. For the optimality of the scaling for $0\le \lambda \ll 1$, we refer to 
\cite{MR1717054,MR2800903}, which treat models to which our approach also applies.

\section{Proof of the exponential decay of the Green function}

To start, let us recall perturbative deterministic estimates on $G_\eta$.
By the maximum principle, 
\begin{equation}\label{e.bd-Greenlap-bis}
0\le G_\eta(x,y) \le \tilde G_\eta(x-y),
\end{equation}
where $\tilde G_\eta$ denotes the
Green function of the massive Laplacian $-\triangle+\eta$.
The latter satisfies the two-sided bound (with different constants $c_d$) 
\begin{equation}\label{e.bd-Greenlap}
\tilde G_\eta(y) \, \simeq \, 
\exp(-c_d\sqrt{\eta} |y|) \times \left\{
\begin{array}{lll}
\sqrt{\eta}^{-1}&:&d=1,\\
 \log(2+\frac{1}{\sqrt{\eta}|y|})&:&d=2,\\
|y|^{2-d}&:&d>2,
\end{array}
\right. 
\end{equation}
and its gradient satisfies
\begin{equation}\label{e.bd-GreenlapGrad}
|\nabla \tilde G_\eta(y)| \, \lesssim \, 
\exp(-\sqrt{\eta} |y|)  |y|^{1-d}.
\end{equation}
 Since $V$ is positive, 
we can apply Caccioppoli's estimate to $\nabla G_\eta$ as usual (say, on a ball $B_{|y|/2}(y)$) -- see, e.g., Lemma~4.1 in \cite{MR3815288},    and hence we also have a bound on $\nabla G_\eta$, however with 
a dependence on $\eta$ in dimensions $d=1,2$:
\begin{equation}\label{e.bd-GreenGrad}
|\nabla   G_\eta(y)| \, \lesssim \, 
\exp(-c_d\sqrt{\eta} |y|)  |y|^{1-d} \times \left\{
\begin{array}{lll}
\sqrt{\eta}^{-1}&:&d=1,\\
1+| \log \eta|&:&d=2,\\
1&:&d>2,
\end{array}
\right. 
\end{equation}
with a slightly different constant $c_d$ still depending on dimension only.
Note that the exponential decay of $G_\eta$ cannot be uniform with respect to $\eta>0$  as there could be arbitrarily large regions where $V$ is simply zero and $G_\eta$ should more or less look like $G$. As we will show below, it is however uniform after taking a statistical average (and, in fact, any finite moment), which is an effect of randomness.

\subsection{Agmon's positivity method}

The upcoming lemma is the starting point of Agmon's positivity method, cf.~\cite{MR745286}.
It is a variant of Caccioppoli's inequality when the test-function is chosen as the solution times an exponential weight.
\begin{lemma}\label{lem:Agmon}
Let $h:\R^d \to \R_+$ be a non-negative differentiable function, and $\chi:\R^d\to [0,1]$ be a smooth function with compact support that vanishes on 
 $Q:=[-\frac12,\frac12)^d$. Then 
 \begin{equation}\label{e.Agmon}
\tfrac12 \int \chi^2 e^{2h} |\nabla G_\eta|^2+\int \chi^2 e^{2h} G_\eta^2(\cdot,0) (\lambda V-4|\nabla h|^2) \,\le\, 4\int |\nabla \chi|^2 e^{2h} G_\eta^2(\cdot,0),
\end{equation}
where $\int$ denotes $\int_{\R^d}$.
\end{lemma}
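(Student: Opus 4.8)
The plan is to test the defining equation \eqref{defGr} for $G_\eta(\cdot,0)$ against the weighted solution itself, namely $\psi:=\chi^{2}e^{2h}G_\eta(\cdot,0)$. Since $\chi$ is smooth, compactly supported, and vanishes on the cube $Q\ni 0$, while $G_\eta(\cdot,0)\in H^{1}_{\loc}(\R^{d}\setminus\{0\})$ is bounded on $\supp\chi$ by \eqref{e.bd-Greenlap-bis}, the function $\psi$ lies in $H^{1}(\R^{d})$ and vanishes in a neighbourhood of the pole; hence it is an admissible test function, and because $\psi(0)=0$ the Dirac mass drops out:
$$\int \nabla G_\eta\cdot\nabla\psi+\int(\lambda V+\eta)\,G_\eta\,\psi=0.$$

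First I would expand $\nabla\psi=\chi^{2}e^{2h}\nabla G_\eta+2\chi e^{2h}G_\eta\,\nabla\chi+2\chi^{2}e^{2h}G_\eta\,\nabla h$ by the Leibniz rule, so that the first integral becomes $\int\chi^{2}e^{2h}|\nabla G_\eta|^{2}$ plus two cross terms. In the zeroth-order integral I would keep $\int\lambda V\,\chi^{2}e^{2h}G_\eta^{2}$ and discard the nonnegative remainder $\int\eta\,\chi^{2}e^{2h}G_\eta^{2}\ge 0$, using $\eta>0$ and $G_\eta\ge 0$ (maximum principle). This yields
$$\int\chi^{2}e^{2h}|\nabla G_\eta|^{2}+\int\lambda V\,\chi^{2}e^{2h}G_\eta^{2}\le -2\!\int\chi e^{2h}G_\eta\,\nabla\chi\cdot\nabla G_\eta-2\!\int\chi^{2}e^{2h}G_\eta\,\nabla h\cdot\nabla G_\eta.$$
The two cross terms are then handled by Young's inequality in the sharp form $2ab\le\tfrac14 a^{2}+4b^{2}$: for the first term I take $a=\chi e^{h}|\nabla G_\eta|$, $b=e^{h}G_\eta|\nabla\chi|$, and for the second $a=\chi e^{h}|\nabla G_\eta|$, $b=\chi e^{h}G_\eta|\nabla h|$. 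Each contributes a term $\tfrac14\int\chi^{2}e^{2h}|\nabla G_\eta|^{2}$, which I absorb into the left-hand side (leaving the factor $\tfrac12$), together with $4\int|\nabla\chi|^{2}e^{2h}G_\eta^{2}$ and $4\int\chi^{2}e^{2h}G_\eta^{2}|\nabla h|^{2}$ respectively; moving the last integral to the left produces exactly \eqref{e.Agmon}.

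The only genuinely delicate point is justifying that $\psi$ is an admissible test function when $h$ is merely differentiable, since then $\nabla\psi\in L^{2}$ requires $\int\chi^{2}e^{2h}G_\eta^{2}|\nabla h|^{2}<\infty$. I would dispose of this either by noting that there is nothing to prove when that integral is infinite (the inequality \eqref{e.Agmon} being then vacuous), or, more safely, by first assuming without loss of generality that $h$ is bounded and Lipschitz on $\supp\chi$ and recovering the general case by approximating $h$ with truncations and passing to the limit via Fatou's lemma. Apart from this bookkeeping, the argument is a single integration by parts followed by a scalar optimization of the Young constants ($\eps=\tfrac14$ in both applications), so I anticipate no real obstacle.
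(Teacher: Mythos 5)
Your proof is correct and follows essentially the same route as the paper: test \eqref{defGr} with $\chi^2 e^{2h}G_\eta(\cdot,0)$, expand via Leibniz, drop the nonnegative $\eta$-term, and apply Young's inequality (your $2ab\le\tfrac14 a^2+4b^2$ is the same as the paper's $ab\le\tfrac18 a^2+2b^2$) to absorb the two cross terms, yielding \eqref{e.Agmon} with the identical constants. The brief remark on admissibility of the test function and truncating $h$ is a reasonable precaution that the paper leaves implicit, but it does not change the argument.
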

\begin{proof}
Testing \eqref{defGr} with $\chi^2 e^{2h} G_\eta(\cdot,0)$ yields the identity
$$
\int \nabla (\chi^2 e^{2h} G_\eta(\cdot,0)) \cdot \nabla G_\eta(\cdot,0)+\int (\eta+\lambda V)\chi^2 e^{2h} G_\eta^2(\cdot,0)=0.
$$
Expanding $ \nabla (\chi^2 e^{2h} G_\eta)=\chi^2 e^{2h} \nabla G_\eta +2 G_\eta \chi^2 e^{2h} \nabla h+2G_\eta\chi e^{2h}\nabla \chi$, we obtain the claim by discarding the term with $\eta$, and using $ab \le \frac18 a^2+2b^2$
on each term to get 
$$
|(2 G_\eta \chi^2 e^{2h} \nabla h+2G_\eta\chi e^{2h}\nabla \chi)\cdot \nabla G_\eta |
\le \tfrac12\chi^2 e^{2h} |\nabla G_\eta|^2 + 4G_\eta^2  e^{2h}|\nabla \chi|^2 +4G_\eta^2\chi^2 e^{2h}|\nabla h|^2.
$$
\end{proof}
In order to infer exponential decay of $G_\eta$ from \eqref{e.Agmon}, one would need $h$ to be linear. 
However, $V$ is not uniformly bounded from below, and if $h$ is linear,  the integrand on the left-hand side of \eqref{e.Agmon} cannot be positive.
To circumvent this issue, in what follows, we shall combine  \eqref{e.Agmon} with Harnack's inequality and two probabilistic ingredients, depending on 
 the
dimension:
\begin{itemize}
\item For $d>2$, by taking the expectation of \eqref{e.Agmon} and using a rank-one perturbation argument, 
we may  
choose $h(x)=c|x|$  
and conclude directly;  
\item For $d\le 2$, this rank-one perturbation argument does not work (since we have no deterministic a priori bound on $G_\eta$ which is uniform in $\eta$). In this case (this holds in any dimension) up to technical details, we essentially choose for the Agmon function $h$  the chemical distance of first passage percolation, which we then control using the classical results \cite{MR876084} by Kesten.
\end{itemize}
By the maximum principle, for all $\lambda\le \lambda'$ we have $G_{\eta,\lambda'} \le G_{\eta,\lambda}$, so that
in the rest of this section we may assume without loss of generality that $\lambda \le 1$. This will make the technicalities simpler as in many instances the constants in Harnack's inequality will be uniform with respect to $\lambda$. 

\subsection{Rank-one perturbation for $d\ge 3$}\label{sd3}

We split the proof of \eqref{e.annealed-mass} into two steps.

\medskip

\step1  Proof of  \eqref{e.annealed-mass}, modulo the rank-one perturbation estimate \eqref{e.rank1}.\\
In what follows, we denote by $Q(z)$ the unit cube centered at $z\in \ZZ^d$. Since $\lambda \leq 1$ and the size of $Q(z)$ is 1, by Harnack's inequality (see for instance \cite[Theorem~8.20]{MR1814364}) for non-negative solutions ($(-\triangle+(\lambda V+\eta))G_\eta=0$ outside $Q$), there exists $C<\infty$ such that for all $z \in \Z^d$ with $z\ne 0$,
$\sup_{Q(z)} G_\eta^2(\cdot,0) \le C \inf_{Q(z)}G_\eta^2(\cdot,0)$.
Hence, \eqref{e.Agmon} entails
\begin{equation} \label{e.app-Agmon0}
\sum_{z\in \Z^d\setminus\{0\}} \sup_{Q(z)}G_\eta^2(\cdot,0) 
\int_{Q(z)} \chi^2 e^{2h}  (\tfrac1C \lambda V -|\nabla h|^2) \, \lesssim
\, \sum_{z\in \Z^d\setminus\{0\}} \sup_{Q(z)}G_\eta^2(\cdot,0)\int_{Q(z)} |\nabla \chi|^2 e^{2h}.
\end{equation}
Set now $G_*(z):=\expec{\sup_{Q(z)} G_\eta^2(\cdot,0)}^\frac12$.
Assume that $h$ and $\chi$ are deterministic.
By taking the expectation on both sides of the above, we obtain
\begin{equation} \label{e.exp-Agmon}
\sum_{z\in \Z^d\setminus\{0\}} \int_{Q(z)} \chi^2 e^{2h}  (\tfrac1C \expecm{\lambda V \sup_{Q(z)}G_\eta^2(\cdot,0)} -G_*^2(z)|\nabla h|^2) \,\le\, \sum_{z\in \Z^d\setminus\{0\}} G_*^2(z)\int_{Q(z)} |\nabla \chi|^2 e^{2h}.
\end{equation}
Assume now that there exists $C<\infty$ such that 
\begin{equation}\label{e.rank1}
\expecm{\lambda V \sup_{Q(z)}G_\eta^2(\cdot,0) } \ge \tfrac1C \lambda   G_*^2(z) \varphi(\cdot-z).
\end{equation}
Combined with \eqref{e.rank1}, \eqref{e.exp-Agmon} then takes the form
\begin{equation}\label{e.exp-Agmon2}
\sum_{z\in \Z^d\setminus\{0\}} G_*^2(z)\int_{Q(z)} \chi^2 e^{2h}  (\tfrac1{C^2} \lambda  \varphi(\cdot-z) -|\nabla h|^2) \,\le\, \sum_{z\in \Z^d\setminus\{0\}} G_*^2(z)\int_{Q(z)} |\nabla \chi|^2 e^{2h}.
\end{equation}

At this point we basically want to choose $h$ as a multiple of $\sqrt \lambda |x|$ and $\chi$ equal to 0 near zero and 1 away from a ball of radius 2 so that the integral on the left-hand side is roughly $e^{c\sqrt \lambda |z|}$ and the right hand side is confined to a ball of radius 2 and bounded by a constant. This would give the desired exponential decay of the Green function. However, the formalities are a bit more technical than that, and we follow \cite[Appendix~B]{MR2826466}.

For all $n,m$, define $\chi_n$ and $h_m$ via
\begin{eqnarray*}
&&\chi_n|_{Q_1}\equiv 0, \quad \chi_n|_{Q_n \setminus Q_2}\equiv 1, \quad \chi_n|_{\R^d \setminus Q_{n+1}}\equiv 0, \quad |\nabla \chi_n|\le 2,\\
&& h_m(x)=\mu |x|_\infty \text{ for }x\in Q_{2m}, \ h_m(x)=\mu m\text{ for }x\in  \R^d \setminus Q_{2m},
\end{eqnarray*}
where $Q_n$ is a cube centered at zero of the sidelength $n$ and 
$\mu=c\sqrt \lambda$ with $c$ to be chosen later. With these choices, for any finite $n,m$ the inequality \eqref{e.exp-Agmon2} is satisfied with $h=h_m$ and $\chi=\chi_n$. Fixing $m$ and using the decay of the Green function given by $G_\eta \le \tilde G_\eta$, and \eqref{e.bd-Greenlap} and boundedness of $h_m$, we can pass to the limit $n\uparrow +\infty$  by the dominated convergence theorem. Combining all the terms with $z\in \Z^d, |z|_\infty \le 2$, on the right-hand side of the inequality, this gives
\begin{multline}\label{e.exp-Agmon3}
\sum_{z\in \Z^d,\, |z|_\infty>2} G_*^2(z)\int_{Q(z)} e^{2h_m}  (\tfrac1{C^2}  \lambda  \varphi(\cdot-z) -  \mu^2) \,\\
\le\, \sum_{z\in \Z^d\setminus\{0\}, \,|z|_\infty \le 2} G_*^2(z)\int_{Q(z)} (\tfrac1{C^2}  \lambda   \varphi(\cdot-z) + \mu^2+4 )e^{2h_m}.
\end{multline}
We now choose $\mu$. By definition of $h_m$ we have (up to redefining $C$)
$$
\int_{Q(z)} e^{2h_m}  \tfrac1{C^2}  \varphi(\cdot-z) \ge \Big(\int_{Q} \varphi   \Big)  \int_{Q(z)}  e^{2h_m}    e^{-2\mu}\tfrac1{C^2}  =\int_{Q(z)}  e^{2h_m}    e^{-2\mu}\tfrac1{C} ,
$$
so that for the choice $\mu = c \sqrt{\lambda}  $ with a suitable $c$ and using the deterministic bound
$G_\eta(x,0)\le C|x|^{2-d}$ 
 (recall that $d\geq 3$), \eqref{e.exp-Agmon3} turns into
\begin{equation*} 
\sum_{z\in \Z^d,\, |z|_\infty>2} G_*^2(z)\tfrac1C  {\lambda}  \int_{Q(z)} e^{2h_m}   \,
\le\, C 
\end{equation*}
(we only care about small $\lambda$).
By the monotone convergence theorem in the limit $m\uparrow +\infty$, this entails for some constants $C\ge 1 \ge c>0$ independent of $\lambda$
$$
G_*(z) \,\le\, C \frac1{\sqrt{\lambda}} \exp(-c  \sqrt{\lambda} |z|) \quad \mbox{for all $z\ne 0$}.
$$
Combining this with Harnack's inequality and the deterministic bound $G_\eta(x,y)\le C |x-y|^{2-d}$, then yields 
$$\expec{(\int_{Q(x)} G_\eta(\cdot,0))^2}^\frac12 \lesssim \Big( \frac1{\sqrt{\lambda} } \exp(-c \sqrt{\lambda} |x|)\Big)\wedge 1, \quad\mbox{for all $x\in \R^d$,}$$
and once again using Harnack's inequality, the same deterministic bound, and interpolation, we establish
\begin{equation*}
0 \le \expec{(\int_Q G_\eta(x,y+y')dy')^{2p}}^\frac1{2p} \le  \,C \Big(  \lambda^{-\frac1{2p}}   \exp(-\tfrac cp  \sqrt{\lambda} |x-y|)\Big)\wedge 1, \,\,\mbox{for all $p\ge 1$, $x,y\in \R^d$,}
\end{equation*}
and ultimately, \eqref{e.annealed-mass}.

\medskip

\step2 Proof of \eqref{e.rank1} by rank-one perturbation.\\
For all $z \in \Z^d$, denote by $G_\eta^{z,-}$ the Green function
for the  potential  $V^{z,-}$ associated with $\omega^{z,-}$
defined by $\omega^{z,-}_j=\omega_j$ if $j\ne z$ and $\omega^{z,-}_z=1$. By the maximum principle we have $0\le G_\eta^{z,-} \le G_\eta$, so that 
\begin{equation*}
\expecm{V \sup_{Q(z)}G_\eta^2(\cdot,0) } \ge \expecm{V \sup_{Q(z)}(G_\eta^{z,-}(\cdot,0))^2 } .
\end{equation*}
On $Q(z)$, $V$ only depends on $\omega_z$ whereas $G_\eta^{z,-}(\cdot,0)$ is independent on $\omega_z$ by construction, so that by independence, 
$$
 \expecm{V \sup_{Q(z)}(G_\eta^{z,-}(\cdot,0))^2 }=\varphi(\cdot-z) \expec{\omega} \expecm{\sup_{Q(z)}(G_\eta^{z,-}(\cdot,0))^2 }.
$$
To prove \eqref{e.rank1}, it remains to argue that $\expecm{\sup_{Q(z)}(G_\eta^{z,-}(\cdot,0))^2 }\gtrsim   \expecm{\sup_{Q(z)}(G_\eta (\cdot,0))^2 }$. To this aim, we 
 notice 
that the function $G_\eta-G_\eta^{z,-}$ satisfies the equation
\begin{equation}\label{e.eq-diff}
-\triangle (G_\eta -G_\eta^{z,-})+(\lambda V+\eta) (G_\eta -G_\eta^{z,-}) = \lambda (1-\omega_z) \varphi(\cdot -z)   G^{z,-}_\eta,
\end{equation}
so that we have the Green representation formula 
\begin{equation}\label{e.GF-1}
 (G_\eta-G_\eta^{z,-})(x,0) = \lambda \int_{Q(z)} G_\eta(x,x')(1- \omega_{z})\varphi(x'-z)G_\eta^{z,-}(x',0)dx'.
\end{equation}
We combine \eqref{e.GF-1} with the deterministic bound $0\le G_\eta(y,y')\le C |y'-y|^{2-d}$ to the effect that for all $x \in Q(z)$,
\begin{equation*} 
0\le (G_\eta -G_\eta^{z,-})(x) \le C \lambda \sup_{Q(z)}G_\eta^{z,-},
\end{equation*}
from which we deduce (using $\lambda \le 1$)
$$
\sup_{Q(z)}G_\eta^{z,-} \ge \tfrac1{C}  \sup_{Q(z)} G_\eta ,
$$
and therefore \eqref{e.rank1}.

\subsection{Agmon function and first-passage percolation for $d\ge 1$}

In low dimensions $d=1,2$, we do not have the deterministic and uniform bound $G_\eta \lesssim 1$.
By a percolation argument, 
 we will first check 
that for all $p\ge 1$, $\sup_z \expec{(\int_{Q(z)} G_\eta^p(\cdot,0))^p}^\frac1p \lesssim_p 1$, which we refer to as anchoring, cf.~Lemma~\ref{lem:anchor} below. This is however not sufficient to run (optimally) the rank-one perturbation argument displayed above for $d\ge 3$.
Instead, we use a quenched approach, and use as a function $h$ in Lemma~\ref{lem:Agmon} the chemical distance in first passage percolation. Using the classical bounds  \cite{MR876084} by Kesten on first passage percolation, we then post-process the quenched bounds into annealed bounds on the Green function.

\subsubsection{Anchoring by percolation}
\begin{lemma}[Anchoring]\label{lem:anchor}
There exists a stationary random field $r_*$ on $\R^d\times \R^d$ such that for all $\eta>0$ and all $x,y\in \R^d$
we have 
$$
0\le \int_{Q}G_\eta(x,y')dy'  \le r_*(x,y) +\left\{
\begin{array}{rcl}
d=1&:& C \lambda^{-1} ,
\\
d=2&:& C \lambda^{-1} ,
\\
d>2&:&1,
\end{array}
\right.
$$
and for all $p\ge 1$,
\begin{equation}\label{e.anchoring}
\sup_{x,y} \expec{r_*^p(x,y)}^\frac1p \leq  C 
\left\{
\begin{array}{rcl}
d=1&:& p ,
\\
d=2&:& \log(p+1) ,
\\
d>2&:&1.
\end{array}
\right.
\end{equation}
\end{lemma}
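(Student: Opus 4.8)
\textbf{Proof plan for Lemma~\ref{lem:anchor} (Anchoring).}

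The plan is to exploit the fact that the zero set of $V$ percolates only in a controlled way: around any point $x$, the connected cluster of cubes $Q(z)$ carrying at least one ``not too small'' potential value forms a barrier at a random but stochastically bounded distance. First I would fix a threshold $\delta_0>0$ in the support of $F$ with $F(\delta_0)<1$ small enough that the complementary event $\{\omega_z\le \delta_0\}$ is subcritical for site percolation on $\Z^d$ (possible since $0$ is the infimum of the support and $F$ is nontrivial). For each $z$ let $\mathcal{C}_*(z)$ be the connected cluster (in the usual $\Z^d$ adjacency) of sites $z'$ with $\omega_{z'}\le \delta_0$ containing $z$, and set $r_*(x,y):=\Phi\big(\mathrm{diam}\,\mathcal{C}_*(z_x)\big)$ for a suitable increasing deterministic function $\Phi$, where $z_x$ is the cube containing $x$ (and one takes the max over $z_x,z_y$, or simply drops the $y$-dependence since the bound will not need it). By the exponential decay of subcritical cluster sizes, $\pr{\mathrm{diam}\,\mathcal{C}_*(z_x)\ge t}\le C e^{-ct}$ uniformly in $x$, which after choosing $\Phi(t)=C'\lambda^{-1}e^{\kappa\sqrt\lambda t}$ in $d\le 2$ (respectively a power of $t$ in $d\ge 3$) and integrating will yield \eqref{e.anchoring}: the $p$-th moment of $e^{\kappa\sqrt\lambda\,\mathrm{diam}}$ against an exponential tail costs a factor $p$ in $d=1$, $\log(p+1)$ in $d=2$, after optimizing, exactly as in the bounds~\eqref{e.bd-Greenlap}.

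Next I would establish the pointwise bound on $\int_Q G_\eta(x,y')dy'$. Two cases. If $x$ lies far from its surrounding cluster, i.e. the cube $Q$ around the target point $0$ (here $y=0$ after translating) is separated from $x$ by the cluster $\mathcal{C}_*$, then on the unbounded component of $\{V\le\delta_0\}^c$-complement we can compare $G_\eta$ with the Green function of $-\triangle+(\eta+\lambda\delta_0 \mathbf 1_{\mathcal C_*})$; more simply, Agmon's Lemma~\ref{lem:Agmon} applied with a bounded $h$ supported between $x$ and the cluster, together with Harnack (as in \eqref{e.app-Agmon0}), gives $\sup_Q G_\eta(\cdot,x)\lesssim (\text{geometric factor})\times\sup_{\mathcal C_*}G_\eta$, and on a single cube the mass $\int_Q G_\eta$ is controlled by $\tilde G_\eta$ from \eqref{e.bd-Greenlap-bis}--\eqref{e.bd-Greenlap}, which in $d\le 2$ is at most $C\lambda^{-1}$ after we recall $\lambda\le 1$ was reduced to (no: here $\eta$-dependence is the issue) — more carefully, one uses that once inside a cube with $\omega\ge\delta_0$, local regularity plus the positive potential $\lambda\delta_0$ produces a $\lambda$-dependent but $\eta$-independent bound $C\lambda^{-1}$ on the local mass, via the resolvent identity against $-\triangle+\lambda\delta_0$ on a ball. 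If instead $x$ is itself inside a large cluster, the contribution is absorbed into $r_*(x,y)$ by definition of $\Phi$. In $d\ge 3$ the deterministic bound $G_\eta(x,y)\le C|x-y|^{2-d}$ already gives the ``$1$'' on cubes adjacent to $0$, and the cluster geometry only enters to upgrade this to the full statement; so there $r_*$ can even be taken of order $1$ with uniformly bounded moments.

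The main obstacle I expect is the $\eta$-uniformity in dimensions $d=1,2$: the crude bound $G_\eta\le\tilde G_\eta$ blows up like $\eta^{-1/2}$ (or $|\log\eta|$) as $\eta\downarrow 0$, so one cannot simply bound the mass on a far cube by $\tilde G_\eta$ times volume. The resolution is to never let $G_\eta$ travel through a long $V=0$ corridor without an exponential penalty: the cluster $\mathcal C_*$ surrounding $x$ is an $\eta$-independent random barrier, and on the side of the barrier containing $0$ the operator $-\triangle+\lambda V+\eta$ is, after a reflection/extension, controlled by $-\triangle+\lambda\delta_0$ on a bounded region whose size is the cluster diameter — giving a bound $C_\lambda$ depending only on $\lambda$ and that diameter, hence absorbed into $r_*$. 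Making this comparison rigorous — choosing the right enlarged region, applying Lemma~\ref{lem:Agmon} with $h$ a truncated linear weight reaching across the corridor, and keeping every constant independent of $\eta$ — is where the real work lies; the percolation input \eqref{e.anchoring} itself is then a routine consequence of Kesten-type subcritical tail bounds.
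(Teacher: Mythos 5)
Your overall framework — find a surrounding shell of ``good'' cubes (where $\omega$ exceeds a fixed threshold $\gamma>0$) whose complement is a subcritical percolation cluster, anchor the Green function at those good cubes with a bound of order $\lambda^{-1}$, and absorb the contribution of the surrounding bad cluster into a random field $r_*$ with exponential tails — is exactly the right one and essentially the paper's. But two essential points are off.

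First, the choice $\Phi(t)=C'\lambda^{-1}e^{\kappa\sqrt\lambda\,t}$ does not give the stated moment bounds. If $X=\diam\mathcal{C}_*$ has exponential tails $\bP[X>t]\le Ce^{-ct}$, then $\expecm{e^{p\kappa\sqrt\lambda X}}$ is finite only for $p\kappa\sqrt\lambda<c$, so $\expec{r_*^p}^{1/p}$ cannot be $\lesssim p$ uniformly in $\lambda$ for all $p\ge 1$. Moreover, stuffing a factor $\lambda^{-1}$ into $r_*$ violates the requirement that its moments be $\lambda$-independent; the $\lambda^{-1}$ lives entirely in the additive deterministic term. What actually works is $\Phi(t)=t$ in $d=1$ and $\Phi(t)=\log t$ in $d=2$ (and $\Phi\equiv C$ in $d\ge 3$). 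With $X$ having exponential tails, one then computes $\expec{X^p}^{1/p}\lesssim p$ and $\expec{(\log X)^p}^{1/p}\lesssim\log(p+1)$, which is precisely \eqref{e.anchoring}. The exponential weights $e^{\sqrt\lambda\,t}$ belong to the decay argument of Theorem~\ref{tGrAnt} (via Agmon and first-passage percolation), not to the anchoring, and importing them here is a conflation of the two steps.

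Second, the pointwise comparison should not go through Agmon's lemma with a bounded weight (a tool for decay, which is circular here), but through two more elementary observations. (i) Integrating the defining equation for $G_\eta$ over $\R^d$ gives $\int(\lambda V+\eta)G_\eta(\cdot,0)=1$, so on any cube where some $\omega_j\ge\gamma$, Harnack's inequality forces $\sup G_\eta\lesssim (\lambda\gamma)^{-1}$; this identity is the only source of the $\lambda^{-1}$ bound, and it should be stated explicitly rather than alluded to via a ``resolvent identity against $-\triangle+\lambda\delta_0$.'' (ii) Inside the connected bad cluster $\calC$ surrounding $y'$, compare $g_\eta(\cdot)=\int_Q G_\eta(x,\cdot)\,dx$ via the maximum principle with the solution $g$ of $-\triangle g=\mathds 1_Q$ in $\calC$, $g=g_\eta$ on $\partial\calC$; since $\partial\calC$ consists of good cubes, the boundary data is $\lesssim\lambda^{-1}$, and the Dirichlet correction grows linearly in $\diam\calC$ in $d=1$ and logarithmically in $d=2$ (the growth of the free-Laplacian Green function). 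This comparison is what produces the correct $\Phi$, with no exponential weight appearing. Your reflection/extension heuristic points in a similar direction but the clean route is this direct maximum-principle comparison on the cluster.

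Smaller matters: one needs to enlarge the scale (replace $\Z^d$ by $2^k\Z^d$ for a fixed $k=k(F)$ and look for a good site inside mid-edge cubes) so that the set of good edges is genuinely supercritical — a single-site threshold $\delta_0$ with $F(\delta_0)<1$ small need not suffice on $\Z^d$ directly, since the supremum of the support of $F$ enters. Also $r_*$ should be built around the cluster containing $y'$ (the integration point), not $x$, with the symmetry of $G_\eta$ then giving the symmetric statement.
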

In dimensions $d>2$, \eqref{e.anchoring} follows from the deterministic bound $G_\eta(x,y)\lesssim |x-y|^{2-d}$ on the Green function. In dimensions $d\le 2$, \eqref{e.anchoring} 
will follow 
from integrating the defining equation for $G_\eta$ in combination with the maximum principle and a percolation argument (only needed for $d=2$).
\begin{proof}[Proof of Lemma~\ref{lem:anchor}]
It remains to treat $d=1,2$.
We integrate the equation for $G_\eta$ on $\R^d$, which yields using exponential decay
\begin{equation}\label{e.int-eq}
\int  (\lambda V+ \eta)(y) G_\eta(0,y)dy=1.
\end{equation}
We start with $d=1$, and then turn to $d=2$.

\medskip

\step1 Dimension $d=1$.\\
Let $y' \in \Z$, and let $y_1>y'$ (respectively, $y_{-1}<y'$) be the closest integer to $y'$ such that $\omega_{y_1}=1$ (respectively, $\omega_{y_{-1}}=1$). Then, by \eqref{e.int-eq} and Harnack's inequality (say, on $Q(y_1)$), $0\le G_\eta(0,y_1) \le C\lambda^{-1}$ (respectively, $0\le G_\eta(0,y_{-1})\le C\lambda^{-1}$). By the maximum principle, we thus have $0\le G_\eta(0,y) \le g(y)$, where $g$ solves
$$
-g''(y)=\delta(y) \text{ on }(y_{-1},y_1), \quad g(y_{-1})=G_\eta(0,y_{-1}), \quad g(y_1)=G_\eta(0,y_1). 
$$
The function $g$ can be decomposed as a sum of the Laplace's Dirichlet Green function on $(y_{-1}, y_1)$ and a harmonic function with data $G_\eta(0,y_{-1})$, $G_\eta(0,y_1)$ at the endpoints. The latter is a linear function maximized 
 for $G_\eta(0,y_{-1})$ or $G_\eta(0,y_1)$,
and the former is 
 at most $y_{1}-y_{-1}$, 
so that all in all we have 
$$0\le g(y') \le G_\eta(0,y_{-1})+G_\eta(0,y_1)+ y_{1}-y_{-1},$$
so that
\begin{equation} \label{e.added} 
G_\eta(0,y') \,\le\,C\lambda^{-1}+( y_1-y_{-1}).
\end{equation}
We then set $r_*(0,y'):=  y_1-y_{-1}$, and it remains to notice that for all $n>0$, $\bP[y_1-y'=n]  = \bP[y'-y_{-1}=n] \le C \exp(-\frac1C n)$, to the effect that $\expec{r_*(0,y')^p}^\frac1p \le C p$ for all $p\ge 1$.
To pass from integers $y'$ to real numbers, we use Harnack's inequality.
Since $0$ plays no role in the argument, this concludes  the proof for $d=1$.

\medskip

\step2 Dimension $d=2$.
\noindent
The idea is the same as for $d=1$. 
Set 
$g_\eta(y)=\int_Q G_\eta (x,y)dx$,  so that, by \eqref{e.int-eq}, 
\begin{equation}\label{e.int-eq-b}
\int  (\lambda V+ \eta)(y) g_\eta(y) dy=1 
\end{equation}
and, by \eqref{defGr}, 
\begin{equation}\label{defGr-b}
(-\triangle_y +(\lambda V+\eta))g_\eta(y) =\mathds 1_Q (y).
\end{equation}
Much as for $d=1$, given $y'\in \R^d$, we want to construct a set  $C(y')$ containing $y'$, and such that on its boundary $\partial C(y')$  we control the
function $g$. Because of  \eqref{e.int-eq-b},
we know that on any cube where $V$ is larger than a constant $c_0$ the function $g \lesssim 1/(\lambda c_0)$. The problem is that  these cubes might fail to 
percolate and form connected sets, much less the sets at a controlled distance from $y'$. The remedy is to enlarge these cubes by a fixed multiple depending on the probability law only to ensure percolation and preserve the control of the Green function using the Harnack inequality. 

For future reference, we consider general dimension $d$. To this end, consider the partition $\calQ_k$ of $\R^d$ into cubes $Q_{k}+(2^k \Z)^d$, where $Q_{k}=[-2^{k-1},2^{k-1})^d$ and $k\in \N$ will be chosen below (depending on the probability law of $V$ and independently of $\lambda$). Ultimately, $2^k$ will be the multiple alluded to before. 
Call $\calT_k$ the graph associated with $(2^k \Z)^d$ via $e \in  \calT_k$ iff $e=(z,z')$ with $z,z' \in (2^k \Z)^d$, 
$|z-z'|=2^k$ (the Euclidean distance).
For each edge $e=(z,z')$ of $\calT_k$ we associate the open cube $Q'_e$ centered at $\frac{z+z'}2$ and of sidelength $2^{k-1}$ (to the effect that those cubes are disjoint).
We now define conductivities $(\xi_e)_{e \in \calT_k}$ as follows. Let $\gamma>0$ be such that $\bP[\omega \ge \gamma]>0$.
Set $\xi_e =1$ 
if $\max_{j \in \Z^d \cap Q'_e} \omega_j \ge \gamma$, and $\xi_e=0$ otherwise. 
The sequence $\{\xi_e\}_{e \in \calT_k}$ has iid Bernoulli entries with the probability that $\xi_e=0$ equal to 
$$p=\bP[\omega < \gamma]^{\sharp (Q'_{e} \cap \Z^d)} \leq \bP[\omega < \gamma]^{\frac 1C 2^k-1} $$ 
as  $\sharp (Q'_{e} \cap \Z^d) \ge \frac1C 2^k-1$. We then choose $k$ large enough (and depending only on the probability law $F$) such that $p<\frac12$, so that the edges $e$ such that $\xi_e=1$ percolate
 almost surely. 

Denote by $\mathcal E$ the unique percolation cluster (a set of edges) given by the procedure above and by $\calZ$ the set of vertices of $2^k \Z^d$ forming the endpoints of these edges.
Notice that Harnack's inequality yields on each cube $Q_k(z)=z+Q_k$ with $z\in \calZ$
\begin{equation}\label{e.Harnack-per}
\sup_{Q_k(z)} g_\eta \,\le\, C_k \Bigl(\inf_{Q_k(z)} g_\eta + 1\Bigr)
\end{equation}
(with a constant depending on $k$ but independent of $\lambda$, which is possible due to our assumption $\lambda \le 1$). If $y' \in \bigcup_{z \in \calZ} Q_k(z) $, then by \eqref{e.int-eq-b} and \eqref{e.Harnack-per}, $g_\eta(y') \le C \lambda^{-1}$.
If $y' \notin \bigcup_{z \in \calZ} Q_k(z)$, hence $y' \in Q_k(z_y)$ for some $z_y \in 2^k\Z^d \setminus \calZ$,
we call $\calC_d(y')$ the connected component of  $2^k\Z^d \setminus \calZ$ 
containing $z_y$ -- where connectedness is understood in the discrete sense, 
that is, two points $x'$ and $x''$ of $2^k \Z^d$ are connected iff $|x'-x''|=2^k$ (again, the Euclidean distance). 
We then set $\calC(y')=\cup_{z \in \calC_d(y')} Q_k(z)$, which is our analogue of the set $(y_{-1}, y_1)\ni y'$ from the $d=1$ case.

On $\partial \calC(y')$, \eqref{e.int-eq-b} and \eqref{e.Harnack-per} imply that $g_\eta \le C\lambda^{-1}$. Similarly to the case $d=1$, we then control $g_\eta$ inside $\calC(y')$ by $g$, the function defined by
$$
-\triangle g= \mathds1_Q \text{ in }C(y'), \quad g=g_\eta \text{ on }\partial \calC(y'),
$$
where $Q$ is again the unit cube centered at $0$ that was used to define $g_\eta$ in \eqref{defGr-b}.
By the maximum principle 
and the fact that $\log(|x|)$ is the fundamental solution of the Laplacian,
$0\le \sup_{\calC(y')} g \le \sup_{\partial \calC(y')} g_\eta+C \log(\diam \calC(y'))$, so that $$g_\eta(y') \le  C\lambda^{-1}+C \log(\diam \calC(y'))$$ 
(analogously to \eqref{e.added} above).
The conclusion follows from subcritical Bernoulli percolation estimates in the form $\bP[\diam \calC(y') \ge n] \lesssim \exp(-\frac1C n)$ (see \cite{MR874906}) and the fact that $G_\eta(x,y)=G_\eta(y,x)$. 
\end{proof}

\subsubsection{Agmon distance, exponential decay, and first-passage percolation}

We now prove Theorem~\ref{tGrAnt} for $d=1,2$. The argument is a probabilistic upgrade of the Agmon exponential decay estimates that we used for the case $d>2$. 
 When $d>2$ we were able to 
provide deterministic anchoring of the Green function and this furnished a more straightforward argument. For low dimensions, the proof is resonating but more delicate as the probabilistic argument has to be intertwined with the elliptic considerations from \S\ref{sd3}. 
The upcoming proof holds in any dimension $d\ge 1$, which we all treat at once.
The strategy relies on the exponential decay measured in the chemical distance to the origin using Agmon's positivity method, followed by a ``change of variables'' using first passage percolation estimates to revert back to the Euclidean distance.

\medskip

\step{1} The chemical distance. \\
We use the notation of Step~2 of the proof of Lemma~\ref{lem:anchor}.
In this case, $\calE$ is the set of edges of $(2^k)\Z^d$, and $\{\xi_e\}_{e\in \calE}$ denotes a set of weights on edges.
Recall that if $\xi_{[x,x']}=1$, then there exists $z\in Q_{k+1}(x) \cap Q_{k+1}(x')\cap \Z^d$ such that $w_z>\gamma>0$.
We say that $\pi=(x_0,\dots,x_{N+1})$ is a path of length $N+1$ in $(2^k)\Z^d$ between $x_0$ and $x_{N+1}$ if for all $0\le j\le N$ we have $|x_{j+1}-x_j|=2^k$. The path is self-avoiding if $x_j\ne x_i$ for all $i\ne j$.
To any path $\pi$ we assign the weight 
$$
w(\pi)=\sum_{j=0}^N \xi_{[x_j,x_{j+1}]}.
$$
We then define the chemical distance $d_\chi:2^k\Z^d \times 2^k\Z^d \to \R$ via
$$
d_\chi(x,x') = \inf\Big\{ w(\pi)\,:\, \pi \text{ path between $x$ and $x'$}\Big\}.
$$
Finally, we define the function $h:2^k\Z^d\to \R_+, x \mapsto d_\chi(x,0)$, which we extend on $\R^d$ by using a continuous piecewise affine interpolation (say, on a tetrahedral tessellation of $\R^d$ associated with $2^k \Z^d$).
Since for all $x,x'\in 2^k \Z^d$ with $|x-x'|=2^k$, $|d_\chi(x,0)-d_\chi(x',0)| \in \{0,1\}$, this extension is $2^{-k}$-Lipschitz.
\\

\step{2} Chemical exponential decay via Agmon positivity method.\\
Set $(2^k \Z^d)^*:=\{x \in 2^k\Z^d \,|\, |x|\ge 2^{k+2}\}$, and define $G_{\eta,*}=\sum_{x\in (2^k \Z^d)^*} (\sup_{Q_{k+1}(x)} G_\eta) \mathds1_{Q_{k}(x)}$. 
By Harnack's inequality on each cube $Q_{k+1}(x)$ (which do not contain the origin), we have
\begin{equation*}\label{e.Harnack-per+}
\sup_{Q_{k+1}(x)} G_\eta \,\le\, C_k  \inf_{Q_{k+1}(x)}G_\eta 
\end{equation*}
(with a constant depending on $k$ but independent of $\lambda \le 1$).
Proceeding as in Step~1 in Subsection~\ref{sd3}
and keeping the gradient term in \eqref{e.Agmon}, we obtain for all $\mu>0$ and $m\ge 1$ by monotone and dominated convergence
\begin{multline*}
\int e^{2(h \wedge m)\mu} |\nabla G_\eta|^2+\sum_{x \in (2^k \Z^d)^*}  e^{2 (h(x)\wedge m) \mu} \Big( \int_{Q_{k+1}(x)} G_{\eta,*}^2   \tfrac{e^{-2\mu}}{C^2}  \lambda V
  - \int_{Q_{k}(x)}  G_{\eta,*}^2 Ce^\mu \mu^2|\nabla h|^2\Big)
\\
\le\, C  \int_{Q_{k+1}(0)}  G_\eta^2(\cdot,0).
\end{multline*}
If for some $x\in 2^k \Z^d$, $\nabla h|_{Q_k(x)} \not\equiv 0$, then there exists $z \in Q_{k+1}(x) \cap \Z^d$ such that $\omega_z = 1$. Combined with the $2^{-k}$-Lipschitz property of $h$, this entails with the choice $\mu=c \sqrt \lambda$
\begin{equation*}
\int_{Q_{k+1}(x)} G_{\eta,*}^2   \tfrac{e^{-2\mu}}{C^2}  \lambda V
  - \int_{Q_{k}(x)}   G_{\eta,*}^2 Ce^\mu \mu^2|\nabla h|^2 \ge 0,
\end{equation*}
so that the above takes the form
\begin{equation} \label{e.Agmon-chemical}
\int e^{2c\sqrt \lambda h} |\nabla G_\eta|^2\, \le\, C  \int_{Q_{k+1}(0)}  G_\eta^2(\cdot,0).
\end{equation}

\medskip

\step{3} Control of the chemical distance.\\
Since $p$ is chosen small enough, we are in the regime of super-critical percolation, and \cite[Proposition~(5.8)]{MR876084} reads: There exists $0\le c \le 1 \le C$ such that for all $R\ge 1$,
\begin{multline*}
\bP[\exists \text{ a self-avoiding path }\pi \\
\text{ starting at 0 of length at least $2^kR$ such that }w(\pi)\le cR] \le C \exp(-cR).
\end{multline*}
This implies in particular that for all $R\ge 1$
$$
\bP[d_\chi(0,2^k\Z^d \setminus [-2^{k-1}R,2^{k-1} R)^d) \le cR ] \le C\exp(-cR),
$$
so that there exist $c,C$ such that for all $l\in \N$,
\begin{equation}\label{e.Kesten}
\bP[\inf_{S_l^+} h \le c 2^l] \le C \exp(-c2^l).
\end{equation}

\medskip

\step4 Conclusion.\\
It remains to post-process \eqref{e.Agmon-chemical} and \eqref{e.Kesten}.
Set $S_l:=\{2^l <|x| \le 2^{l+1}\}$ and $S_l^+:=\{2^l <|x| \le 2^{l+2}\}$.
On the one hand, since $G_\eta$ decays exponentially fast by \eqref{e.bd-Greenlap-bis} \& \eqref{e.bd-Greenlap}, we have the telescopic identity
\[
\fint_{S_j} G_\eta = \sum_{l=j}^\infty \Big(\fint_{S_l}G_\eta-\fint_{S_{l+1}}G_\eta\Big).
\]
On the other hand, by the fundamental theorem of calculus and Cauchy-Schwarz' inequality, and \eqref{e.Agmon-chemical},
\begin{align*}
\Big|\fint_{S_l}G_\eta-\fint_{S_{l+1}}G_\eta\Big| \,\le& \,C 2^l \Big(\fint_{S_l^+} e^{2c\sqrt \lambda h} |\nabla G_\eta|^2 \Big)^\frac12 \Big(\fint_{S_l^+} e^{-2c\sqrt \lambda h}\Big)^\frac12
\\
\le &\,C\Big(\int_{Q_{k+1}(0)}  G_\eta^2(\cdot,0)\Big)^\frac12 (2^l)^{1-\frac d2}\Big(\fint_{S_l^+} e^{-2c\sqrt \lambda h}\Big)^\frac12 .
\end{align*}
Hence, by Harnack's inequality,
\[
\sup_{S_j} G_\eta \,\le \,C   \Big(\int_{Q_{k+1}(0)}  G_\eta^2(\cdot,0)\Big)^\frac12 \sum_{l=j}^\infty (2^l)^{1-\frac d2}\Big(\fint_{S_l^+} e^{-2c\sqrt \lambda h}\Big)^\frac12 .
\]
Combined with the following consequence of \eqref{e.Kesten} 
\[
\expec{\fint_{S_l^+} e^{-2 c\sqrt \lambda h} }^\frac12 \le \Big( \expec{e^{-c\sqrt \lambda 2^l}\mathds1_{\inf_{S_l^+} h > c2^l}}+\bP[\inf_{S_l^+}h \le c2^l]\Big)^\frac12 \,\le\, C e^{- c \sqrt \lambda 2^l} ,
\]
and with Lemma~\ref{lem:anchor},
this implies by Cauchy-Schwarz' inequality in probability
\begin{align*}
\expec{\sup_{S_j} G_{\eta}} \le& \,   
C   \expec{\int_{Q_{k+1}(0)}  G_\eta^2(\cdot,0)}^\frac12 \sum_{l=j}^\infty (2^l)^{1-\frac d2}\expec{\fint_{S_l^+} e^{-2c\sqrt \lambda h}}^\frac12
\\
\le &\, C\sum_{l=j}^\infty (2^l)^{1-\frac d2}   e^{- c \sqrt \lambda 2^l} \times \left\{
\begin{array}{rcl}
d=1&:&  \lambda^{-1} ,
\\
d=2&:&  \lambda^{-1} ,
\\
d>2&:&1,
\end{array}
\right.
\\
\le& \,C e^{- c \sqrt \lambda 2^j} \times \left\{
\begin{array}{rcl}
d=1&:&  \lambda^{-1} ,
\\
d=2&:&  \lambda^{-1} ,
\\
d>2&:&1,
\end{array}
\right.
\end{align*}
and \eqref{e.annealed-mass} follows by interpolation with Lemma~\ref{lem:anchor} again.

\section{Application to the landscape function}\label{sec:landscape}

\subsection{Existence and control of the moments: proof of Theorem~\ref{tLand}}\label{sec:0}

Let $0<\eta <1$ and consider the modified landscape equation 
\begin{equation}\label{e.landscape+}
-\triangle u_\eta + \lambda (V+\eta)u_\eta =1.
\end{equation}
By the exponentially weighted energy estimate
$$
\int_{\R^d} \exp(-c\sqrt  \eta |x|) (|\nabla u_\eta|^2+(\lambda V+\eta)u_\eta^2)(x) dx \lesssim
\frac1\eta \int_{\R^d}   \exp(-c\sqrt \eta |x|) dx
$$
(which we obtain by testing the equation with $x \mapsto \exp(-c\sqrt \eta |x|)u_\eta$ for $c>0$ small enough), this equation is well-posed in the space $H^1_{\uloc}(\R^d)=\{v \in H^1_\loc(\R^d) \,|\, \sup_{x\in \R^d} \int_{B(x)} v^2+|\nabla v|^2 <+\infty\}$. In addition we have the
representation formula for all $x\in \R^d$
\begin{equation}\label{e.ueta}
0 < u_\eta(x)\,=\,\int  G_\eta(x,y)dy \, \le \, \int  \tilde G_\eta(y)dy,
\end{equation}
and $u_\eta$ is bounded on $\R^d$ (with the bound depending on $\eta$).
By uniqueness  in the space above, 
$u_\eta$ is stationary, and
 we now check that
the energy estimate for \eqref{e.landscape+},  
 in combination with the ergodic theorem,  
 yields
\begin{equation}\label{e.bd-grad-u}
\expec{\int_Q |\nabla u_\eta|^2} \le \expec{\int_Q u_\eta}.
\end{equation}
Indeed, let $\chi$ be a smooth non-negative, compactly supported function of mass unity.
For all $R\ge 1$, set $\chi_R := R^{-d} \chi(\cdot /R)$.
Then for all $R$ we have by testing  \eqref{e.landscape+}  with $\chi_R u_\eta$ and discarding the non-negative term in the right hand side
\begin{eqnarray*}
\int  \chi_R u_\eta &=& \int \nabla (\chi_Ru_\eta)\cdot \nabla u_\eta+ ( \lambda V+\eta) \chi_R u_\eta^2
\\
&\ge & \int \chi_R  |\nabla u_\eta|^2
 - \tfrac 1 R \int  R^{-d}u_\eta|\nabla \chi|(\frac \cdot R) |\nabla u_\eta|.
\end{eqnarray*}
Since $u_\eta \in H^1_{\uloc}(\R^d)$, the second right-hand side term vanishes in the limit $R\uparrow +\infty$, and \eqref{e.bd-grad-u} follows by the ergodic theorem in the form
$$
\lim_{R\uparrow +\infty} \int  \chi_R u_\eta = \expec{\int_Q u_\eta},
\quad \lim_{R\uparrow +\infty} \int R^{-d}\chi(\frac\cdot R) |\nabla u_\eta|^2 =\expec{\int_Q |\nabla u_\eta|^2}.
$$
Since $u_\eta$ is non-negative and uniformly bounded,
 for all $p\ge 1$ we have $\sup_{x} \expec{ u_\eta(x)^p} <\infty$. 
 We wish to pass to the limit in this bound as $\eta \downarrow 0$, and therefore need uniformity with respect to $\eta$. 
To this end, we appeal to Theorem~\ref{tGrAnt}.
Taking the $p$-th power and the expectation of \eqref{e.ueta}, we have by Minkowski's inequality
for all $x\in \R^d$
\begin{eqnarray*}
\expec{  u_\eta (x)^p}^\frac1p& =&\left(\expec{\Big( \int G_\eta(x,y )dy\Big)^p}\right)^\frac1p
\\
&=&\left(\expec{\Big(\sum_{z\in \Z^d}\int_{ Q} G_\eta(x,z+z') dz'\Big)^p}\right)^\frac1p
\\
&\le & \sum_{z\in \Z^d} \left(\expec{\Big( \int_{ Q} G_\eta(x,z+z')dz'\Big)^p}\right)^\frac1p,
\end{eqnarray*}
from which we conclude that  $\expec{  u_\eta(x)^p}^\frac1p \,\lesssim_{p,\lambda} \, 1$.
By elliptic regularity (Moser estimates - cf., e.g., \cite[Theorem~8.22]{MR1814364}), 
$$\sup_Q u_\eta \lesssim 1+\int_{2Q} u_\eta,$$
so that the above yields for all $x\in \R^d$,
\begin{equation} \label{e.land-bound-mu}
\expec{(\sup_Qu_\eta)^p}^\frac1p\,\lesssim_{p,\lambda}\, 1.
\end{equation}

This furnishes the desired moment bounds \eqref{eq.mom} for $u_\eta$ in place of $u$, and it remains to pass to the limit. 
In particular, used for $p=1$ and $p=2$, and combined with \eqref{e.bd-grad-u}, this entails that $u_\eta$ is  bounded in $L^2(\Omega,H^1(Q))$ 
 (where $\Omega$ is our probability set of events $\omega$) independently of $\eta>0$. 
Hence, by the Banach-Alaoglu theorem, there is a subsequence converging weakly towards a function $u \in L^2(\Omega,H^1(Q))$ (which we may extend by stationarity as a function of $L^2(\Omega,H^1_\loc(\R^d))$) that satisfies the same moments bounds as $u_\eta$.
It remains to show that $u$ solves \eqref{e.landscape}.
Let $\zeta \in L^2(\Omega)$ and $\chi \in C^\infty_c(\R^d)$. By testing 
\eqref{e.landscape+} with $\chi$, multiplying by $\zeta$, and taking the expectation,
we obtain
$$
\expec{\zeta\Big(\int  \nabla \chi \cdot \nabla u_\eta+(\lambda V+\eta)\chi u_\eta - \chi \Big)}=0.
$$
Taking the limit $\eta \downarrow 0$ (along the subsequence) yields
$$
\expec{\zeta\Big(\int  \nabla \chi \cdot \nabla u +\lambda V\chi u - \chi \Big)}=0.
$$
Since $C^\infty_c(\R^d)$ is separable and $\zeta$ is arbitrary, this implies that almost surely 
we have for all $\chi \in C^\infty_c(\R^d)$,
$$
\int  \nabla \chi \cdot \nabla u+\lambda V \chi u = \int \chi,
$$
and hence $u$ is a distributional solution of \eqref{e.landscape} almost surely.
Uniqueness is standard and follows the proof of \eqref{e.bd-grad-u} (we simply 
use $u\in L^2(\Omega,H^1(Q))$ rather than $u_\eta \in H^1_\uloc(\R^d)$ almost surely).
 Indeed let $u_1$ and $u_2$ be two stationary solutions in $L^2(\Omega,H^1(Q))$, and let $\chi$ be a smooth non-negative, compactly supported function of mass unity.
For all $R\ge 1$, set $\chi_R := R^{-d} \chi(\cdot /R)$.
Then for all $R$ we have by testing  \eqref{e.landscape}  with $\chi_R(u_1-u_2)$
\begin{eqnarray*}
0&=&\int \nabla (\chi_R(u_1-u_2))\cdot \nabla (u_1-u_2)+ \lambda V \chi_R(u_1-u_2)^2
\\
&\ge & \int  \chi_R |\nabla (u_1-u_2)|^2
 - \frac1 R \int  R^{-d}|u_1-u_2||\nabla \chi|(\frac \cdot R) |\nabla (u_1-u_2)|.
\end{eqnarray*}
Since $u_1,u_2 \in L^2(\Omega,H^1_\loc(\R^d))$ are stationary, taking the limit $R \uparrow +\infty$ yields by the ergodic theorem
$$
\expec{\int_Q |\nabla (u_1-u_2)|^2}=0,
$$
so that $\nabla u_1=\nabla u_2$. This implies in turn that $u_1=u_2+K$ for some random constant $K$. Using the zero order term in the above in form of $0=\int \lambda V \chi_R K^2$ for $R\gg 1$ large enough so that $V|_{\supp \chi_R} \not \equiv 0$ (which holds true almost surely), we deduce that $K=0$, and therefore $u_1=u_2$. 
  
\medskip

We conclude this subsection by proving that $u_\eta$ converges strongly to $u$ in $L^2(\Omega,W^{1,\infty}(Q))$ (this will be needed to control covariances).
The difference $u_\eta-u$ satisfies the equation
$$
-\triangle (u_\eta-u)+(\lambda V +\eta)(u_\eta-u) = \eta u,
$$
so that 
$$
(u_\eta-u)(x)= -\eta \int  G_\eta(x,y) u(y)dy.
$$
Hence, by the triangle inequality followed by Cauchy-Schwarz' inequality
$$
\expec{  (u_\eta-u)(x)^2}^\frac12  \,\le\, \eta \int  \expec{(\int_{Q}G_\eta(x,y+y')dy')^4}^\frac14 \expec{\sup_Q u(y+\cdot)^4}^\frac14 dy.
$$
By Theorems~\ref{tLand} and~\ref{tGrAnt}, the right-hand side is of order $\eta$.
Combined with elliptic regularity in the form
\begin{equation}\label{e.unif-grad}
\sup_Q |\nabla (u-u_\eta)|^2+\sup_Q |u-u_\eta|^2 \,\lesssim\, \int_{2Q} (u-u_\eta)^2+\eta\sup_{2Q}  u^2,
\end{equation}
and with Theorem~\ref{tLand}, this entails the claimed convergence in $L^2(\Omega,W^{1,\infty}(Q))$
\begin{equation}\label{e.quant-conv}
\expec{\sup_Q \Big(|u_\eta-u|^2+|\nabla (u_\eta-u)|^2\Big)}^\frac12 \,\lesssim\, \eta.
\end{equation}

\subsection{Control of covariances:  proof of Theorem~\ref{tLandCorr}}

Since $\omega=\{\omega_z\}_{z\in \Z^d}$ is iid, we have the covariance inequality
for any measurable maps $X$ and $Y$ of $\omega$
\begin{equation}\label{e.cov}
|\cov{X}{Y}| \,\le \, \sum_{z\in \Z^d} \expec{(X-X_z)^2}^\frac12 \expec{(Y-Y_z)^2}^\frac12,
\end{equation}
where $X_z:=X(\omega^z)$ with $\omega^z_{z'}=\omega_{z'}$ for all $z' \ne z$ and $\omega^z_z$
distributed according to $F$ independently of $\omega$.
This is the main  ingredient in the proof Theorem~\ref{tLandCorr}.

\medskip

We start with the covariance of the function $u$ itself.
By~\eqref{e.quant-conv}, it is enough to prove the claim for $u_\eta$ and pass to the limit $\eta\downarrow 0$ in the estimates.
We shall apply \eqref{e.cov} to $X= u_\eta(x)$ and $Y= u_\eta(y )$ for $x,y\in \R^d$.
Denote by $u_\eta^z$ the modifed landscape function with $\omega$ replaced by $\omega^z$ (which exists on a deterministic basis). The function $\delta_z u_\eta := u_\eta-u_\eta^z$ solves
$$
-\triangle \delta_z u_\eta + (\lambda V +\eta)\delta_z u_\eta = \lambda (\omega_z-\omega) \varphi(\cdot-z) u^z_\eta. 
$$
By the Green representation formula,
we obtain (recall that $\varphi$ is supported in $Q$)
\begin{eqnarray*}
{\expec{   |\delta_z u_\eta(x)| ^2}^\frac12}
& =&  \lambda \expec{ \Big(  \int  G_\eta(x,y)(\omega_z-\omega)(y) \varphi(y-z) u^z(y)dy\Big)^2}^\frac12
\\
&=& \lambda \expec{ (\omega_z-\omega)^2 \Big( \int_{ Q} G_\eta(x,y'+z) \varphi(y') u^z_\eta(y'+z)dy' \Big)^2}^\frac12.
\end{eqnarray*}
Since $\varphi$ is bounded and $|\omega_z-\omega|\le 1$, this yields by Theorem~\ref{tGrAnt} and~\eqref{e.land-bound-mu} (recall that $u^z$ has the same law as $u$),
\begin{eqnarray*}
{\expec{  |\delta_z u_\eta(x)|^2}^\frac12 }
&\lesssim& \expec{ \Big(\int_{ Q} G_\eta(x,y'+z)dy'\Big)^4}^\frac14
\expec{\Big(\sup_Q u^z_\eta(z+\cdot)\Big)^4}^\frac14 \\
&\lesssim&  \exp(-c (\sqrt{\lambda}\wedge 1)   |z-x|).
\end{eqnarray*}
Inserting this bound into \eqref{e.cov} applied to $X= u_\eta(x) $ and $Y=  u_\eta(y )$,
we obtain
\begin{multline}\label{e.cov-u}
|\cov{u_\eta(x)}{ u_\eta(y ) }| \,\lesssim\, \sum_{z\in \Z^d} \exp(- c(\sqrt{\lambda}\wedge 1)  |x-z|)\exp(-c (\sqrt{\lambda}\wedge 1)  |z-y|) \\
\lesssim\,   \exp(-c (\sqrt{\lambda}\wedge 1) |x-y|),
\end{multline}
which yields Theorem~\ref{tLandCorr} for $u$ by passing to the limit $\eta \downarrow 0$.

\medskip

We now turn to  $\frac1u$.
Since for all $\eta \le \lambda$, $u \ge u_\eta \ge \int  \tilde G_\lambda  = :\tau_\lambda>0$ (a deterministic positive lower bound) by the maximum principle,
\eqref{e.quant-conv} also entails 
\begin{equation}\label{e.quant-conv-inv}
\expec{\sup_Q  \Big|\frac1{u_\eta}-\frac 1u\Big|^2}\lesssim \eta,
\end{equation}
so that it is enough to bound the covariance of $\frac1{u_\eta}$ and pass to the limit $\eta \downarrow 0$. We will consider a slightly more general case. 

Let $g$ be a twice-differentiable function.
By the Taylor formula
$$
\delta_z g(u_\eta):= g(u^z_\eta)-g(u_\eta) = g(u_\eta-\delta_z u_\eta)-g(u_\eta) = -g'(u_\eta) \delta_z u_\eta + \frac12g''(\alpha_z) (\delta_z u_\eta)^2,
$$
with $\alpha_z := t u_\eta+(1-t) u^z_\eta$ for some $t\in [0,1]$.
Applied to $g(s)=1/s^2$, $s>0$, this yields
$$
\delta_z g(u_\eta)=\frac1{u^2_\eta} \delta_z u_\eta + \frac1{\alpha_z^3} (\delta_z u_\eta)^2,
$$
since $\frac1{u_\eta}< \frac1{\tau_\lambda}<\infty$ and the same is true for $u^z_\eta$.
Hence, as before, \eqref{e.cov-u} holds for $\frac1{u_\eta}$ in the form 
\begin{equation}\label{e.cov-1/u}
\left|\cov{\frac1{u_\eta(x)}}{\frac1{u_\eta(y )} }\right| \,\lesssim\,  \exp(- c(\sqrt{\lambda}\wedge 1) |x-y|),
\end{equation}
which yields Theorem~\ref{tLandCorr} for $\frac1u$ by passing to the limit $\eta \downarrow 0$.

\medskip

Since $\nabla \log u = \frac{\nabla u}u$, by   \eqref{e.quant-conv-inv},
it is enough to prove bounds on the covariance of $\nabla \log u_\eta$ and pass to the limit $\eta \downarrow 0$.
By the ``discrete'' Leibniz rule,
\begin{equation}\label{e.vert-der-drift}
\delta_z \nabla \log u_\eta \,=\, (\nabla \delta_z u_\eta) \frac1{u_\eta} -(\delta_z u_\eta)\frac{\nabla u_\eta^z}{u_\eta u_\eta^z} .  
\end{equation}
By \eqref{e.unif-grad} and Theorem~\ref{tLand} and since $\frac1{u_\eta}< \frac1{\tau_\lambda}<\infty$, we have for all $p\ge 1$,
$$
\expec{\sup_Q\Big|\frac{\nabla u_\eta^z}{u_\eta u_\eta^z}\Big|^p}^\frac1p \lesssim 1,
$$
which allows us to treat the second right-hand side of \eqref{e.vert-der-drift} as we did before.
For the first right-hand side term, we distinguish two regimes.
If $|x-z|\le 2$, we use the triangle inequality and obtain
$$
\expec{|\delta_z \nabla u_\eta(x)|^2}^\frac12 \le 2 \expec{|\nabla u_\eta(x)|^2}^\frac12 \, \lesssim \, 1.
$$
If $|x-z|>2$, we differentiate the Green representation formula
and obtain as for $u_\eta$ itself 
\begin{eqnarray*}
{\expec{ |\delta_z \nabla u_\eta(x)|^2}^\frac12 }
&\lesssim& \expec{ \Big(\int_{Q} |\nabla_x G_\eta(x ,y'+z)|dy'\Big)^4}^\frac14
\expec{\Big(\sup_Q u_\eta^z(z+\cdot)\Big)^4}^\frac14 .
\end{eqnarray*}
Since $G_\eta(x,\cdot)$ satisfies $-\triangle G_\eta(x,y'+z)+(\lambda V +\eta)G_\eta(x,y'+z)=0$
for $y'+z$ away from $x$, we may differentiate the equation with respect to $x$ and use elliptic regularity to the effect that
$$
\int_{Q} |\nabla_x G_\eta(x,y'+z)|dy' \,\lesssim\, \int_{(-2,2)^d} G_\eta(x,y'+z) dy'dx.
$$
At this point, we may conclude as for $u_\eta$ itself, and finally obtain the desired
variant of \eqref{e.cov-u} for $\nabla \log u_\eta$  
\begin{equation}\label{e.cov-nablalogu}
|\cov{\nabla \log u_\eta(x)}{\nabla \log u_\eta(y) }| \,\lesssim\,   \exp(-c (\sqrt{\lambda}\wedge 1) |x-y|),
\end{equation}
which yields Theorem~\ref{tLandCorr} for $\nabla \log u$ by passing to the limit $\eta \downarrow 0$.

\section*{Acknowledgements}
The authors warmly thank Ron Peled and Felipe Hernandez for pointing out results on first passage percolation,
and Christophe Garban for the early references on such results.
AG acknowledges financial support from the European Research Council (ERC) under the European Union's Horizon 2020 research and innovation programme (Grant Agreement n$^\circ$~864066). 
 GD is partially supported by the European Community H2020 grant GHAIA 777822,
and the Simons Foundation grant 601941, GD.
SM is supported in part by the NSF grant DMS 1839077 and Simons Foundation grant 563916, SM.

\bibliographystyle{plain}

\Addresses

\end{document}